%% file: main.tex
\documentclass{paperclass}

\usepackage{enumitem}
\setitemize[1]{itemsep=5pt}
\setenumerate[1]{itemsep=5pt}

\definecolor{MK_One_Five}{RGB}{90,180,172}
\definecolor{MK_Two_Six}{RGB}{33,102,172}
\definecolor{MK_Three_Six}{RGB}{27,120,55}
\definecolor{MK_Three_One}{RGB}{118,42,131}
\hypersetup{linkcolor=MK_Three_Six,citecolor=MK_Two_Six,urlcolor=MK_Three_One}

\usepackage{lipsum}
\usepackage{parskip}

\numberwithin{equation}{section}

\makeatletter
\def\@setdate{Date: \today}
\makeatother

\title[Freeness of Pluricanonical Linear system]{Freeness of Pluricanonical Linear System on Smooth Minimal n-folds of General Type}
\author{Hanye Gu}
\address{\rm School of Mathematical Sciences, Fudan University, Shanghai 200433, China}
\email{24110180015@m.fudan.edu.cn}
\date{\today}

\begin{document}

\begin{abstract}
  Let $X$ be a smooth minimal projective variety of general type of dimension $n\geq 4$ over $\bb C$. We prove that $|\frac{n^2+n}{2}K_X|$ is base point free. This gives a new bound for the pluricanonical case of Fujita Freeness Conjecture for nef and big canonical divisors. 
\end{abstract}

\keywords{minimal variety, base point free, pluricanonical linear system}
\subjclass[2020]{14J40, 14E99, 14C20}

\maketitle
\tableofcontents
\input{Introduction}

\input{Preliminary}
\input{MethodForSingularpoint}

\input{MainTheoremandApplication}

\printbibliography

\nocite{*}

\end{document}

%% file: Introduction.tex
\section{Introduction}
In this paper, we work over the field of complex numbers $\bb C$. Let $X$ be a smooth projective variety of dimension $n$ over $\bb C$.\par
Let $B$ be a Cartier divisor on $X$, the complete linear system 
$$|B|=\bb P (H^0(X,\ml O_X(B)))$$ 
gives rise to a rational map $\phi_{|B|}:X\dashrightarrow \bb P(H^0(X,B)^{\vee})$. If $|B|$ is base point free, then $\phi_{|B|}$ becomes a morphism to projective space, which is one of the main tools to study projective varieties. Further, if $X$ is a projective variety of general type, consider the canonical ring 
$$R(X,K_X)=\bigoplus_{m\geq 0}H^0(X,mK_X).$$
The canonical model of $X$ is defined to be $X_{can}:=\opn{Proj}R(X,K_X)$. So it is natural to study the freeness of adjoint and pluricanonical linear systems. Fujita, in~\cite{Fujita87}, posed the following conjecture:
\begin{conjecture}[\emph{Fujita Freeness Conjecture}]
\emph{Let $X$ be a smooth projective variety of dimension $n$ and $B$ be an ample Cartier divisor. Then $|K_X+mB|$ is base point free for every integer $m\geq n+1$.}  
\end{conjecture}
The bound for $m$ is sharp as one can take $X=\bb P^n$ and $B=H$ where $H$ is a hyperplane section.\par
Fujita Freeness conjecture has been proved in dimension at most $5$, see \cite{Rei88,EL93,Kawamata97,ye2017global,YZ20}. In higher dimensions, Angehrn and Siu~\cite{AS95} proved that $|K_X+mB|$ is base point free for every integer $m> \frac{n(n+1)}{2}$.  Koll\'ar~\cite{Kollar97} gave an algebraic version of their proof. Helmke~\cite{Hel97} developed an estimate of multiplicity and proved a quadratic bound. Later, Heier proved a bound of $O(n^{\frac{4}{3}})$ in \cite{Hei02}. Ghidelli and Lacini~\cite{GL24} refined Helmke's estimate and proved that $|K_X+mB|$ is base point free for $m\geq n(\log\log(n)+2.34)$. In a recent preprint, Han~\cite{Han26} first established a linear bound for arbitrary $n$. More precisely, he proved that $|K_X+mB|$ is base point free for every integer $m\geq 2n$.\par

There are also effective results when $B$ is merely nef and big. Ein and Lazarsfeld~\cite{EL93} proved that $|mK_X|$ is base point free for every integer $m\geq7$ where $X$ is a smooth minimal projective threefold of general type. Ma\c{s}ek, Ein and Lazarsfeld~\cite{ELM95} generalized the result to terminal $3$-folds. Matsushita~\cite{Matsushita96} further generalized their result to klt threefolds. Lee~\cite{Lee00} proved that $|4K_X|$ is base point free where $X$ is a projective canonical threefold with $K_X$ an ample Cartier divisor.\par

In higher dimensions, Koll\'ar~\cite{Kol93} established an effective base point free theorem for klt pair. Fujino, in~\cite{Fuj10}, further proved a local freeness criterion for lc pairs. More precisely, let $(X,\Delta)$ be lc and $M\equiv K_X+\Delta+N$ where $N$ is an ample divisor. Let $x\in X$ be a closed point.  If there are positive numbers $c(k)$ such that $N^{\opn{dim}Z}\cdot Z>[c(\opn{dim}Z)]^{\opn{dim}Z}$ for every positive-dimensional irreducible subvariety $Z\ni x$ and $\sum_{k=1}^{\opn{dim}X}k/c(k)\leq 1$, then $|M|$ is base point free at $x$. After passing to a canonical model, Fujino's theorem shows that $|\frac{n^2+n+4}{2}K_X|$ is base point free where $X$ is a smooth minimal projective variety of dimension $n$ of general type. 

In this paper, we restrict our interest to the freeness problem of  pluricanonical linear system when the canonical divisor is nef and big. Specifically, we consider the following question:
\begin{question}
\label{question2}
    Let $X$ be a smooth minimal projective variety of general type, i.e. $K_X$ is nef and big. Can we find an $m(n)$ such that $|mK_X|$ is base point free if $m\geq m(n)$?
\end{question}

In this paper, we prove the following theorem, which answers~\ref{question2}.
\begin{theorem}
\label{maintheorem3}
    Let $X$ be a smooth minimal projective variety of general type of dimension $n\geq 4$. Let $V=K_X^n$. If an integer $m$ satisfies $m\geq \max\{2n+1,R_n(V)+1\}$, where
    $$ R_n(V):=
 \begin{cases}
  \frac n{V^{1/n}}+1,&n=2,\\
  \frac n{V^{1/n}}+\frac{n(n-1)}{2}-1,&n\geq3,
 \end{cases}$$
    then $|mK_X|$ is base point free.
\end{theorem}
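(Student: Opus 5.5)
Fix a point $x\in X$; we must show $x$ is not a base point of $|mK_X|$. First pass to the canonical model $f\colon X\to X_{can}=:Y$. Since $X$ is smooth and minimal of general type, $Y$ has canonical Gorenstein singularities and $K_X=f^*K_Y$ with $K_Y$ ample Cartier. Hence $H^0(X,mK_X)=H^0(Y,mK_Y)$, and it suffices to show $|mK_Y|$ is free at $y=f(x)$. Write $mK_Y=K_Y+(m-1)K_Y$ with $N=(m-1)K_Y$ ample. The standard Kawamata--Viehweg/Nadel approach then applies: construct an effective $\mathbb Q$-divisor $D\equiv \lambda K_Y$ with $\lambda<m-1$, such that $(Y,D)$ is lc at $y$, not klt at $y$, and such that $\{y\}$ is an isolated lc center (after perturbation, the unique minimal lc center through $y$). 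Vanishing of $H^1(Y,\mathcal J(D)\otimes\mathcal O(mK_Y))$ then surjects sections onto the fibre at $y$.

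The divisor $D$ is built by the Angehrn--Siu/Koll\'ar cutting-down induction on the dimension of the minimal lc center. I would split into two cases, following the section ``MethodForSingularpoint''.

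\emph{Case $y$ a smooth point of $Y$.} Since $K_Y^n=V$, there is $D_0\equiv c_nK_Y$ with multiplicity $>n$ at $y$ once $c_n>n/V^{1/n}$; this makes $(Y,D_0)$ non-klt at $y$. If the minimal lc center $Z\ni y$ has dimension $d\ge1$, then $K_Y^d\cdot Z\ge1$, because $K_Y$ is Cartier and ample. By Helmke-type or Koll\'ar-style estimates, the cost to cut $Z$ down is $c_d$ with $c_d>d$ (multiplicity $d$ on a $d$-dimensional center, using $(K_Y|_Z)^d\ge1$), together with the $\epsilon$ for tie-breaking. Summing gives a threshold of the form $n/V^{1/n}+\sum_{d=1}^{n-1}d$. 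The $\frac{n(n-1)}{2}-1$ in $R_n(V)$ suggests a saving of $1$ in the last step: for a curve center ($d=1$), one can instead directly use that $\deg K_Y|_C\ge1$ together with Kawamata's subadjunction or curve-level vanishing, so the curve step costs only a little.

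\emph{Case $y$ a singular (canonical Gorenstein) point.} Here I would use the results of ``MethodForSingularpoint''. Either the singular locus of $Y$ gives lc centers cheaply, since canonical singularities have a positive-dimensional non-klt locus after adding small divisors, or one works on $X$ directly with $f^*$ and the fibre $f^{-1}(y)$. I expect this step to need the hypothesis $m\ge 2n+1$. A plausible route is a Han-type or Kawamata-type argument: because $K_Y$ is Cartier with $(Y,0)$ canonical, Kawamata's bound for Gorenstein canonical points, giving multiplicity at least $2n$ cost, or an argument through the exceptional divisor, yields an isolated lc center at cost $\le 2n$.

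\textbf{Main obstacle.} The hardest part is controlling the singular-point case and making the lc-center cutting at singular points cost no more than $2n$. This requires multiplicity estimates of the form $\operatorname{mult}_y Y\cdot(\cdots)$ compatible with canonical Gorenstein singularities. The careful accounting that yields exactly $\frac{n(n-1)}2-1$ in the smooth case, via the improved curve step, is the secondary difficulty. Finally, I would conclude by combining the two bounds as $m-1>\max\{2n,R_n(V)\}$, i.e. $m\ge\max\{2n+1,R_n(V)+1\}$, since $m$ is an integer and the inequalities are strict.
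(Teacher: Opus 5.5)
There is a genuine gap, and it sits exactly at the step you flag as the main obstacle. Your framework matches the paper: pass to the canonical model $Y$, build $\Gamma\equiv tK_Y$ with $t<m-1$ having $\{y\}$ as an isolated lc center, then apply Nadel vanishing. But you split the two hypotheses between smooth and singular points in the reverse of what the available tools give.

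In the paper, $m\ge 2n+1$ is used on the \emph{smooth} locus. Han's linear bound gives $\operatorname{Bs}|K_X+(m-1)K_X|\subset\mathbf{B}_+(K_X)$ for $m-1\ge 2n$. Moreover $\mathbf{B}_+(K_X)=\operatorname{Exc}(f)$, which lies over $\operatorname{Sing}Y$ because $f$ is an isomorphism over $Y_{\mathrm{reg}}$. The hypothesis $m\ge R_n(V)+1$ is what handles the \emph{singular} points.

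Your singular-point step claims an isolated lc center at cost $\le 2n$ via a ``Kawamata bound for Gorenstein canonical points''. This is not an established result. A linear bound at singular points is precisely what is missing, which is why the quadratic term in $R_n(V)$ appears at all. Two further claims fail:
\begin{itemize}
\item The heuristic that canonical singularities acquire positive-dimensional non-klt loci after adding small divisors is false: $(Y,0)$ is klt and remains klt after adding any sufficiently small effective $\mathbb{Q}$-Cartier divisor.
\item ``$\operatorname{mult}_y D_0>n$ forces non-klt'' is only valid at smooth points.
\end{itemize}

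The missing idea is that singular points are \emph{cheaper} for the first step.
\begin{itemize}
\item \emph{Initial divisor.} By Liu--Xu, $\widehat{\operatorname{vol}}(y,Y)<n^n$ strictly for $y\in\operatorname{Sing}Y$. So some $\mathfrak{m}_y$-primary ideal $\mathfrak{a}$ has $\operatorname{lct}_y(Y;\mathfrak{a})^n e(\mathfrak{a})<n^n$. Counting dimensions of $H^0(kK_Y)$ against $\mathcal{O}_{Y,y}/\mathfrak{a}^k$ yields $\Delta_0\sim_{\mathbb{Q}}t_0K_Y$, lc but not klt at $y$, with $t_0<n/V^{1/n}$ \emph{strictly} and non-klt locus of dimension $\le n-1$.
\item \emph{Cutting on singular $Y$.} One then cuts lc centers on the klt variety $Y$. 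This needs Fujino--Hashizume adjunction to compute $b_y$ through discrepancies on the center $Z$. It bounds the cost of one cut by $(b-b')\bigl(\operatorname{mult}_yZ/(K_Y^d\cdot Z)\bigr)^{1/d}$. Your per-step accounting ignores $\operatorname{mult}_yZ$, so ``$(K_Y|_Z)^d\ge 1$'' alone does not bound the cost.
\item \emph{Where the $-1$ comes from.} Centers of dimension $k\ge 3$ cost at most $k$ each (up to $\varepsilon$). The dimension-$2$ and dimension-$1$ steps together cost at most $b\sqrt{e}\le 2$, by Han's estimate $be\le 2$ with $e=\operatorname{mult}_yZ$ for a surface center. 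This, not a curve-level subadjunction trick, is the source of the $-1$ in $R_n(V)$. The total is $t_y<R_n(V)\le m-1$.
\end{itemize}

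Your smooth-point cutting argument also cannot substitute for Han's theorem. At a smooth point $\widehat{\operatorname{vol}}=n^n$, so you need $c_n>n/V^{1/n}$ and obtain only $t<R_n(V)+\varepsilon$. This fails exactly when $m-1=R_n(V)$, for example $V=1$ and $m=n(n+1)/2$, which is the case of Theorem~\ref{maintheorem}.
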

In particular, we have the following result.
\begin{theorem}
\label{maintheorem}
Let $X$ be a smooth minimal projective variety of general type of dimension $n\geq 4$. The pluricanonical linear system $|mK_X|$ is base point free for every integer $m\geq \frac{n(n+1)}{2}$.    
\end{theorem}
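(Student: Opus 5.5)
The plan is to deduce Theorem~\ref{maintheorem} directly from Theorem~\ref{maintheorem3}. For this we only need to check that the threshold $\max\{2n+1, R_n(V)+1\}$ is at most $\frac{n(n+1)}{2}$ whenever $n\geq 4$. We do this under the extra assumption $V\geq 1$, which is justified first. Since $X$ is smooth, $K_X$ is a Cartier divisor, and it is nef and big by assumption. Hence $V=K_X^n$ is a positive integer, so $V\geq 1$ and therefore $V^{1/n}\geq 1$.

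Next we bound $R_n(V)$. For $n\geq 4\geq 3$ we are in the second case of the definition, so
$$R_n(V)+1=\frac{n}{V^{1/n}}+\frac{n(n-1)}{2}\leq n+\frac{n(n-1)}{2}=\frac{n(n+1)}{2}.$$
Then we compare $2n+1$ with $\frac{n(n+1)}{2}$. The inequality $2n+1\leq \frac{n(n+1)}{2}$ is equivalent to $n^2-3n-2\geq 0$. This holds for $n\geq 4$, since $16-12-2=2>0$ and the left side increases in $n$ for $n\geq 2$. Combining the two bounds gives $\max\{2n+1,R_n(V)+1\}\leq \frac{n(n+1)}{2}$.

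Finally, let $m\geq \frac{n(n+1)}{2}$ be an integer. Then $m$ satisfies the hypothesis of Theorem~\ref{maintheorem3}, so $|mK_X|$ is base point free. This deduction has no real obstacle. All the substance lies in Theorem~\ref{maintheorem3}, presumably proved later via a multiplier-ideal or Kawamata--Viehweg type argument that builds a divisor with an isolated non-klt centre at a given point. The only point that needs care here is the integrality $V\geq 1$, which is what makes the $\frac{n}{V^{1/n}}$ term at most $n$.
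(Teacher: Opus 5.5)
Your proposal is correct and follows the paper's argument: deduce the statement from Theorem~\ref{maintheorem3}, using that $V=K_X^n\geq 1$ (an integer, since $K_X$ is a nef and big Cartier divisor) to get $R_n(V)+1\leq \frac{n(n+1)}{2}$, and checking $2n+1\leq \frac{n(n+1)}{2}$ for $n\geq 4$. Your explicit verification of the second inequality is the comparison actually needed; the paper's one-line proof instead writes ``$R_n(K_X^n)\geq 2n$'', which is false in general (for $n=4$ and large $V$ one has $R_4(V)<8$) and is evidently a misstatement of that inequality.
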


\subsection{Outline of Proof}
Before we outline the proof of our main theorems, we first recall the methof of cutting LC centers, which is the main framework.\par
First of all, our ideal case is the following: Let $X$ be a smooth projective variety of dimension $n$ and $B$ be a nef and big Cartier divisor. If there exist an effective $\bb Q$-Cartier divisor $G$ satisfying:
\begin{enumerate}
    \item $G\sim_{\bb Q}cB$ with $0<c<1$;
    \item $(X,G)$ is lc near $x$ but not klt at $x$;
    \item $\{x\}$ is the unique LC center through $x$.
\end{enumerate}
Then $x$ is an isolated point of the non-klt locus, and hence of the subscheme $T=V(\ml J(X,G))$. Since $B-G\sim_{\bb Q}(1-c)B$ is nef and big, Nadel vanishing gives $H^1(X,\ml O_X(K_X+B)\otimes \ml J(X,G))=0$. Consequently, we have the following surjective map:
$$H^0(X,\ml O_X(K_X+B))\twoheadrightarrow H^0(X,\ml O_X(K_X+B)|_T).$$
Since $x$ is an isolated point of $T$, the right hand side contains a section nonvanishing at $x$. We can lift it to $s\in H^0(X,\ml O_X(K_X+B))$ such that $s(x)\neq 0$, which proves the freeness of $|K_X+B|$ at $x$. For the detailed proof, see Proposition~\ref{prop:isolated-nadel}.\par
The remaining part is to construct such $G$, this is where the LC center cutting technique comes into play. We begin with an effective divisor $\Delta\sim_{\bb Q}cB$, such that $(X,\Delta)$ is lc near $x$ but not klt at $x$. Let $Z\ni x$ be the minimal LC center. If $\dim Z>0$, then we can construct an effective $\bb Q$-divisor $D$ such that $\opn{ord}_x (D|_Z)$ is sufficiently high and $D\sim_{\bb Q}B$. Under some numerical conditions, the pair $(X,\Delta+tD)$ where $t=\opn{lct}_x(X,\Delta;D)$ has a smaller minimal LC center $Z^\p\subsetneq Z$ and $\dim Z^\p<\dim Z$. The local discrepancy estimate controls the numerical cost of the operation, allowing us to repeat this procedure until $Z=\{x\}$. Once the minimal LC center is $\{x\}$, we can perturb it and make it the unique LC center through $x$, bringing us back to the ideal case.\par

Let us outline the proof of Theorem~\ref{maintheorem3}. Combine Theorem~\cite[Theorem~5.4]{Han26} with the canonical contraction $f:X\rightarrow Y$ in Proposition~\ref{prop:canonical contraction}, we prove that if $X$ is a smooth projective variety with $K_X$ nef and big, then $\opn{Bs}|mK_X|\subset \mathbf{B}_+(K_X)=\opn{Exc}(f)$ for every integer $m\geq 2n+1$.\par
Let $X$ be a smooth minimal projective variety of general type of dimension $n$. The second part is to prove $\opn{Bs}|mK_X|\cap \opn{Exc}(f)=\varnothing$ for sufficiently large $m$. We first pass to the canonical model $Y$. Note that $Y$ may have singularities, so we have to modify the LC-center-cutting technique to the singular case by normalized volume and adjunction in~\cite{fujino23}. With Han's new estimate about the local discrepancy and $\opn{mult}_xZ$, we can improve Fujino's bound by $2$. Combining all these together, we prove Theorem~\ref{maintheorem3}, from which Theorem~\ref{maintheorem} follows immediately.

\vskip 0.5cm

This paper is organized as follows: Section~2 collects the preliminaries. Section~3 deals with the canonical contraction and pluricanonical linear systems. Section~4 contains the proof of the main theorems and several applications.

\textit{Acknowledgments.} The author would like to express his gratitude to his advisor, Professor Meng Chen, for his patient guidance and constant support throughout his work. Further, the author wants to thank Professor Chen Jiang and Jingjun Han for their helpful discussion. 
The author is also grateful to his fellow students, including Hexu Liu, Minzhe Zhu, Wentao Chang, Mengchu Li, Sicheng Ding, Tianyue Zhang, Pinxian Bie, Peien Du, Zhengjie Yu, Pengjin Wang and Caoyang Zhu for their helpful discussion, encouragement and friendship. The author thanks ChatGPT 6 for identifying typographical and grammatical errors.

%% file: Preliminary.tex
\section{Preliminary}

\label{sec:preliminaries}

\subsection{Divisors, linear systems, and positivity}

\begin{definition}
Let $X$ be a normal variety.  A Weil divisor $D$ is \emph{Cartier} if it is
locally defined by a nonzero rational function.  It is \emph{$\bb{Q}$-Cartier} if
$mD$ is Cartier for some integer $m>0$. A $\bb{Q}$-divisor is a finite
$\bb{Q}$-linear combination of integral divisors.  For $\bb{Q}$-Cartier
$\bb{Q}$-divisors, $D_1\sim_{\bb{Q}}D_2$ means that $mD_1$ and $mD_2$ are integral Cartier divisors and $mD_1\sim mD_2$ for some $m>0$.\par
A \emph{prime divisor over $X$} is a prime divisor $E$ on a normal variety
$Y$ with proper birational morphism $Y\to X$, the center of $E$ on $X$ is
denoted by $c_X(E)$. 
\end{definition}\par

\begin{definition}
Let $D$ be a Cartier divisor and $|D|$ be the complete linear system. The base locus of $|D|$ is the closed set
$$
 \operatorname{Bs}|D|:=\{x\in X\,|\, s(x)=0\text{ for every }s\in H^0(X,\ml{O}_X(D))\}.
$$
\end{definition}

\begin{definition}
    The linear system $|D|$ is said to be base point free at $x$ if the evaluation map
$$H^0(X,\ml{O}_X(D))\to\ml{O}_X(D)\otimes k(x)$$
is surjective.  It is base point free if this holds at every point.
\end{definition}

\begin{definition}
Let $X$ be a normal variety and $B\in N^1_{\bb R}(X)$. $B$ is \emph{nef} if $B\cdot C\geq0$ for every
irreducible curve $C\subset X$.  It is \emph{big} if its numerical class lies
in the interior of the pseudo-effective cone. 
\end{definition}

\begin{definition}
     For a big $\bb{Q}$-Cartier
divisor $D$, the \emph{augmented base locus} is
$$
 \mathbf{B_+}(D):=\bigcap_{D\sim_{\bb{Q}}A+F}\operatorname{Supp} F,
$$
where the intersection is taken over all Kodaira decompositions with $A$ ample $\bb{Q}$-Cartier and $F\geq0$. 
\end{definition}
\begin{remark}
   $x\notin\mathbf{B_+}(D)$ if and only if there exist a Kodaira decomposition $D\sim_{\bb{Q}}A+F$ with $x\notin\operatorname{Supp} F$.
\end{remark}

\vskip 0.5cm

\subsection{Pairs and discrepancy}\par

\begin{definition}
A \emph{subpair} $(X,\Delta)$ consists of a normal variety $X$ and an $\bb{R}$-divisor $\Delta$ such that $K_X+\Delta$ is $\bb{R}$-Cartier. It is a \emph{pair} if $\Delta\geq 0$.  A boundary is
an effective $\bb{R}$-divisor whose coefficients belong to $[0,1]$. In this paper, all pairs are assumed to be $\bb Q$-pair otherwise stated.
\end{definition}\par
\begin{definition}
By  a \emph{log resolution} of a pair $(X,\Delta)$ and ideal sheaf $\ml I$, we mean a projective
birational morphism $f:Y\to X$ such that 
\begin{enumerate}
    \item $Y$ is smooth;
    \item $\ml I\ml O_Y=\ml O_Y(-F)$ for some effective Cartier divisor $F$;
    \item $\opn{Supp}(f_*^{-1}\Delta)\cup\opn{Exc}(f)\cup\opn{Supp}F$ is simple normal crossing.
\end{enumerate}
If $E$ is a prime divisor on a birational model $f:Y\rightarrow X$, write
$$
 K_Y+\Delta_Y=f^*(K_X+\Delta).
$$
The \emph{log discrepancy} of a prime divisor $E$ over $X$ is
$$
 a(E,X,\Delta):=1-\operatorname{coeff}_E(\Delta_Y).
$$
This number is independent of the birational model $Y$.

\end{definition}
\begin{remark}
    By "a log resolution that principalizes the sheaf $\ml I$", we mean a log resolution $f:Y\rightarrow X$ such that $\ml I\ml O_Y=\ml O_Y(-F)$ for some effective Cartier divisor $F$.
\end{remark}

\begin{definition}
The subpair $(X,\Delta)$ is sub-klt (resp. sub-lc) if for every prime divisor $E$ over $X$, $a(E,X,\Delta)>0$ (resp. $\geq 0$). If $\Delta\geq 0$, we remove the "sub" and get the usual klt and lc.
A pair is plt if it is lc and $a(E,X,\Delta)>0$ for every prime divisor exceptional over $X$.\par
$X$ is canonical (resp. terminal) if $K_X$ is $\bb Q$-Cartier and  $a(E,X,0)\geq1$ (resp. $>1$)for every exceptional prime divisor $E$ over $X$.
In particular, canonical and terminal varieties are klt with $\Delta=0$.\par
We say that $(X,\Delta)$ is lc (resp. klt) at $x$ if it is lc (resp. klt) in an open neighborhood of $x$.
\end{definition}

\begin{definition}
Assume $(X,\Delta)$ is lc.  An \emph{LC place} is a prime divisor $E$ over
$X$ such that $a(E,X,\Delta)=0$.  Its center $c_X(E)$ is called \emph{LC center}.
Different LC places may have the same LC center.\par
The \emph{non-klt locus} is the closed set defined by:
$$
 \operatorname{Nklt}(X,\Delta):=\{y\in X:(X,\Delta)\text{ is not klt at }y\}.
$$
Set-theoretically it is the union of all LC centers.
\end{definition}

The following Proposition indicates the existence of minimal LC center.
\begin{proposition}[{\cite[Proposition~1.5]{Kawamata97}}]
\label{thm:minimal-center}
Let $X$ be a normal variety and $\Delta$ an effective $\bb Q$-Cartier divisor such that $K_X+\Delta$ is $\bb Q$-Cartier. Assume that $X$ is klt and $(X,\Delta)$ is lc.  If $W_1$ and $W_2$ are LC
centers of $(X,\Delta)$, then every irreducible component of $W_1\cap W_2$ is also an LC center of $(X,\Delta)$.\par
In particular, if $(X,\Delta)$ is not klt at $x$, then there exist a unique minimal LC center through $x$ under inclusion. Moreover, the minimal LC center through $x$ is normal near $x$. 
\end{proposition}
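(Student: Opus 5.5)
The plan is to work locally around a point of $W_1\cap W_2$ and to encode all LC centers through strata of a crepant birational model, so that Kollár–Shokurov connectedness does the work.

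\textbf{Step 1: encode the LC centers on a model.} Since $X$ is klt and $(X,\Delta)$ is lc, I would take a dlt modification $g\colon(Y,\Delta_Y)\to X$ with $K_Y+\Delta_Y=g^*(K_X+\Delta)$. Alternatively, a log resolution with all coefficients $\leq 1$ after discarding negative parts, keeping track of the sub-boundary, would also do. On such a model the LC centers of $(X,\Delta)$ are exactly the images $g(S)$ of strata $S$ of $\lfloor\Delta_Y\rfloor$.

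\textbf{Step 2: the intersection statement.} Fix an irreducible component $W$ of $W_1\cap W_2$ and localize at its generic point $\eta$. Kollár–Shokurov connectedness says $\operatorname{Supp}\lfloor\Delta_Y\rfloor\cap g^{-1}(\eta)$ is connected. The aim is then to show that some stratum of $\lfloor\Delta_Y\rfloor$ maps onto $W$, not only strata dominating $W_1$ or $W_2$. Two routes:
\begin{enumerate}
\item Add $\varepsilon H$, where $H$ is a general member of a linear system on a neighbourhood of $\eta$ with $\mathrm{Bs}=W$. Then replace $\Delta$ by $(1-\delta)\Delta+\varepsilon H$, with $\delta,\varepsilon$ tuned so that the pair is still lc at $\eta$, every old LC center not containing $W$ is destroyed, and $W$ becomes non-klt. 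This is Kawamata's tie-breaking.
\item Use Fujino's quasi-log adjunction. There the union of LC centers carries a quasi-log structure whose qlc centers are closed under intersection, via the torsion-freeness and vanishing theorems.
\end{enumerate}
I would try route (1) first, since it is elementary.

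\textbf{Step 3: existence and uniqueness of the minimal LC center.} Assume $(X,\Delta)$ is not klt at $x$. There are finitely many LC centers, and every point of $\operatorname{Nklt}$ lies on one, so the set of LC centers through $x$ is finite and nonempty. By Step 2, the component through $x$ of the intersection of all of them is again an LC center. It is therefore the unique minimal one.

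\textbf{Step 4: normality near $x$.} By tie-breaking (perturbing $\Delta$ with a general divisor through the minimal center $Z$ and a small multiple of an ample divisor), I arrange that $Z$ is the unique LC center near $x$ with a unique LC place $E$. On a log resolution, write $K_{Y'}+E+\Delta'=h^*(K_X+\Delta)$ with $\lfloor\Delta'\rfloor\le 0$. Relative Kawamata–Viehweg vanishing then gives the surjection
\[ h_*\mathcal O_{Y'}(\lceil-\Delta'\rceil)\twoheadrightarrow h_*\mathcal O_E(\lceil-\Delta'\rceil). \]
The left side is $\mathcal O_X$ near $x$, because $X$ is klt. Hence $\mathcal O_Z\to h_*\mathcal O_E$ is surjective. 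Since $E\to Z$ factors through the normalization of $Z$, this forces $Z$ to be normal (and $E\to Z$ to have connected fibres) near $x$.

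\textbf{Main obstacle.} I expect the hardest step to be the perturbation in Step 2 (and the analogous one in Step 4): keeping the pair lc at $\eta$ while creating $W$ as a new LC center and not losing minimality. If the numerics there resist, I would fall back on the quasi-log route in (2).
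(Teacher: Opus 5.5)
The paper does not prove this statement; it only quotes \cite[Proposition~1.5]{Kawamata97}, so your plan has to stand on its own. Steps 1, 3 and 4 are fine; Step 4 is the standard tie-breaking plus vanishing argument for normality. Step 2, however, has a genuine gap, and it carries the whole content of the first assertion and supports your Step 3.

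Route (1) is aimed at the wrong target. Since $X$ is klt, $(X,(1-\delta)\Delta)$ has no LC centers at all, so $W_1$ and $W_2$ are destroyed too. For $H$ general with base locus $W$, the threshold $\varepsilon=\operatorname{lct}_\eta(X,(1-\delta)\Delta;H)$ makes $W$ an LC center near $\eta$ for \emph{every} subvariety $W$, whether or not it is one for $(X,\Delta)$. To transfer anything back to $(X,\Delta)$ you would need $\varepsilon(\delta)\to 0$. But on a fixed log resolution principalizing $I_W$, $\varepsilon(\delta)\to\operatorname{lct}_\eta(X,\Delta;H)$. This limit is positive unless some divisor over $W$ already has log discrepancy $0$, i.e.\ unless $W$ is already an LC center, so the argument is circular.

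Connectedness of $\operatorname{Supp}\lfloor\Delta_Y\rfloor\cap g^{-1}(\eta)$ does not by itself give a stratum over $W$ either. A chain of components linking a place over $W_1$ to one over $W_2$ may pass through a component whose image contains both. The intersections along the chain then map to LC centers inside $W_1$ or inside $W_2$, not inside $W_1\cap W_2$. Route (2) appeals to a more general theorem on qlc centers of quasi-log schemes. That is legitimate only as a citation, on the same footing as the paper's reference, not as a proof.

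The missing idea is to reverse Steps 2 and 3 and use tie-breaking to keep \emph{two} minimal centers alive. First show that the minimal LC center through a closed point $w$ is unique. Suppose $Z_1\neq Z_2$ were both minimal through $w$. Take $H_i$ general with base locus $Z_i$ near $w$ and set $\Delta'=(1-\delta)\Delta+\varepsilon_1H_1+\varepsilon_2H_2$. Choose $\varepsilon_i=O(\delta)$ separately so that some LC place of $(X,\Delta)$ over $Z_i$ gets coefficient exactly one; this place should be made to appear as a divisor on a log resolution $\mu$.

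For small $\delta$, the coefficient-one part of $\Delta'_Y$ near $\mu^{-1}(w)$ consists only of LC places of $(X,\Delta)$ over $Z_1$ or $Z_2$. The factor $1-\delta$ kills every other place because $X$ is klt. Divisors over proper subvarieties of $Z_i$ through $w$ have coefficient $<1$ in $\Delta_Y$ by minimality. Kollár--Shokurov connectedness over $w$ then forces a place over $Z_1$ to meet a place over $Z_2$ above $w$. That stratum maps onto an LC center of $(X,\Delta)$ through $w$ inside $Z_1\cap Z_2\subsetneq Z_1$, which is a contradiction.

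Now take $w$ general in $W$, so that every LC center through $w$ contains $W$. The unique minimal center $V\ni w$ lies in both $W_1$ and $W_2$. Hence $W\subseteq V\subseteq W_1\cap W_2$, and so $V=W$.

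Alternatively, relative vanishing and torsion-freeness give $\mathcal O_{W_i}\cong\mu_*\mathcal O_{E_i}$ and $\mathcal O_{W_1\cup W_2}\cong\mu_*\mathcal O_{E_1\cup E_2}$, where $E_i$ are the places over $W_i$. Comparing the two Mayer--Vietoris sequences then yields $W_1\cap W_2\subseteq\mu(E_1\cap E_2)$, which is a union of images of strata.
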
\par

\vskip 0.5cm

\begin{definition}[LC threshold]
Let $(X,\Delta)$ be lc in a neighborhood of $x$ and let $D\geq0$ be an
effective $\bb{Q}$-Cartier $\bb{Q}$-divisor.  Define the local LC threshold of $D$ by
$$
 \operatorname{lct}_x(X,\Delta;D)
 :=\sup\{t\geq0:(X,\Delta+tD)\text{ is lc in a neighborhood of }x\}.
$$
We denote $\operatorname{lct}_x(X,0;D)$ by $\operatorname{lct}_x(X;D)$.\par

\end{definition}

\subsection{Multiplier ideals and Nadel vanishing}

\begin{definition}[Multiplier ideal]
Let $X$ be a normal variety and let $G\geq0$ be an effective $\bb{Q}$-divisor such that $K_X+G$ is $\bb Q$-Cartier.  For a log
resolution $f:Y\to X$ of $(X,G)$, define
$$
 \mathcal{J}(X,G):=f_*\ml{O}_Y\bigl( \lceil K_Y-f^*(K_X+G)\rceil \bigr).
$$
The ideal is independent of the resolution.  The closed subscheme defined by $\mathcal{J}(X,G)$ is called multiplier subscheme, and its support is $\operatorname{Nklt}(X,G)$.
\end{definition}

\begin{theorem}[{\cite[Theorem~2.16]{Kollar97}}]
\label{thm:nadel}
Let $X$ be a normal projective variety and $N$ a line bundle on $X$. Assume that $N\equiv K_X+\Delta+M$ where $M$ is a nef and big $\bb Q$-Cartier $\bb Q$-divisor on $X$ and $\Delta$ is an effective $\bb Q$-divisor such that $K_X+\Delta$ is $\bb Q$-Cartier. Then $ H^i\bigl(X,\ml{O}_X(N)\otimes\mathcal{J}(X,\Delta)\bigr)=0$ for all $i>0$ and $\opn{Supp}(\ml O_X/\ml J(X,\Delta))=\opn{Nklt}(X,\Delta)$.
\end{theorem}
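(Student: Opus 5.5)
We would prove Theorem~\ref{thm:nadel} by pulling the situation back to a log resolution. There Kawamata--Viehweg vanishing holds in its absolute and relative forms, and the multiplier ideal is defined as a pushforward from that resolution, so the vanishing descends via the Leray spectral sequence. Fix a log resolution $f:Y\to X$ of $(X,\Delta)$; $f^*(K_X+\Delta)$ makes sense because $K_X+\Delta$ is $\bb Q$-Cartier. Write $K_Y+\Delta_Y=f^*(K_X+\Delta)$ with $\operatorname{Supp}\Delta_Y$ simple normal crossing, and set
$$
 L:=f^*N+\lceil -\Delta_Y\rceil=f^*N+\lceil K_Y-f^*(K_X+\Delta)\rceil .
$$
Since $N$ is a line bundle, the projection formula gives $f_*\ml O_Y(L)=\ml O_X(N)\otimes\mathcal J(X,\Delta)$.

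\textbf{Step 1 (numerics).} Put $F:=\lceil-\Delta_Y\rceil+\Delta_Y$. It is an effective $\bb Q$-divisor with simple normal crossing support and all coefficients in $[0,1)$. From $N\equiv K_X+\Delta+M$ we get
$$
 L\equiv K_Y+f^*M+F .
$$
Here $f^*M$ is nef and big on $Y$, because pullback under a birational morphism preserves both nefness and the top self-intersection.

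\textbf{Step 2 (absolute vanishing).} Kawamata--Viehweg vanishing in its fractional snc form gives $H^i(Y,\ml O_Y(L))=0$ for all $i>0$.

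\textbf{Step 3 (relative vanishing).} Over $X$, the divisor $f^*M$ is $f$-numerically trivial, hence $f$-nef. Every divisor on $Y$ is $f$-big because $f$ is birational. The relative Kawamata--Viehweg theorem therefore gives $R^jf_*\ml O_Y(L)=0$ for all $j>0$. I expect this to be the only delicate point: one must use the relative version, and it applies exactly because $f^*M$ is a pullback.

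\textbf{Step 4 (descent).} By Step 3 the Leray spectral sequence degenerates, so
$$
 H^i(X,\ml O_X(N)\otimes\mathcal J(X,\Delta))=H^i(Y,\ml O_Y(L))=0\quad\text{for } i>0,
$$
by Step 2.

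\textbf{Step 5 (support statement).} We first check that $\mathcal J(X,\Delta)$ is an ideal of $\ml O_X$.
\begin{itemize}
 \item On the strict transforms of components of $\Delta$, the coefficients of $\lceil-\Delta_Y\rceil$ are $\leq 0$, since $\Delta\ge0$.
 \item Its positive part is therefore $f$-exceptional.
 \item Hence $f_*\ml O_Y(\lceil-\Delta_Y\rceil)\subseteq f_*\ml O_Y(E)=\ml O_X$ for an effective exceptional $E$, using normality of $X$.
\end{itemize}
Now fix $x\in X$. If $(X,\Delta)$ is klt near $x$, then every coefficient of $\Delta_Y$ over a neighbourhood of $x$ is $<1$. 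Thus $\lceil-\Delta_Y\rceil$ is effective there, and the stalk of $\mathcal J$ at $x$ is all of $\ml O_{X,x}$. Conversely, suppose some prime divisor $E$ (which we may take to lie on $Y$, after refining the resolution) has $c_X(E)\ni x$ and coefficient $\ge1$ in $\Delta_Y$. Then $\lceil-\Delta_Y\rceil$ has coefficient $\le -1$ along $E$. Any local section $g$ of $\mathcal J$ near $x$ must then vanish along $c_X(E)$, so $g(x)=0$ and $x\in\operatorname{Supp}(\ml O_X/\mathcal J)$. Together these give $\operatorname{Supp}(\ml O_X/\mathcal J(X,\Delta))=\operatorname{Nklt}(X,\Delta)$.
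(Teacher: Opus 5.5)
Your argument is correct and is the standard proof of the cited result (Kollár's Theorem~2.16, which the paper quotes without proof): pass to a log resolution, apply global and relative Kawamata--Viehweg vanishing to $f^*N+\lceil -\Delta_Y\rceil\equiv K_Y+f^*M+\{\Delta_Y\}$, and descend via the Leray spectral sequence. The one point you leave implicit is that $x\in\operatorname{Nklt}(X,\Delta)$ gives a divisor $E$ on your fixed resolution with $\operatorname{coeff}_E\Delta_Y\geq 1$ and $x\in f(E)$; this holds because klt-ness is detected on a log resolution, so $\operatorname{Nklt}(X,\Delta)$ is the closed set $\bigcup f(E)$ taken over the finitely many such $E$.
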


\vskip 0.5cm

\begin{definition}
Let $Z$ be an irreducible variety of dimension $d$ and $x\in Z$ be a closed point.  The \emph{multiplicity} of $Z$ at $x$ is the Hilbert-Samuel multiplicity determined by
$$
 \operatorname{length}\bigl(\ml{O}_{Z,x}/\mathfrak m_x^k\bigr)
 =\frac{\operatorname{mult}_xZ}{d!}k^d+O(k^{d-1}).
$$\par

\end{definition}

\begin{definition}
    Let $X$ be an integral variety, $x\in X$ be a closed point. For $0\neq g\in\ml O_{X,x}$, we define the $\mfk{m}_x$-adic order to be
    $$\nu_{\mfk{m}_x}(g):=\max\{s\in \bb Z_{\geq 0}\,|\,g\in \mfk{m}_x^s\}.$$
    Let $D\geq 0$ be an effective $\bb Q$-Cartier divisor on $X$ and $m$ be a positive integer such that $mD$ is an effective Cartier divisor near $x$, and let $f\in \ml{O}_{X,x}-\{0\}$ be the local defining equation of $mD$. Define the order of $D$ at $x$ to be
    $$\opn{ord}_x D:=\frac{1}{m}\lim\limits_{k\rightarrow +\infty}\frac{\nu_{\mfk{m}_x}(f^k)}{k}.$$
\end{definition}
\begin{remark}
    When $x$ is a smooth point, then 
    $$\opn{ord}_x D=\frac{1}{m}\max\{s\in\bb Z_{\geq 0}\,|\,f\in\mfk{m}_x^s\}.$$
\end{remark}
\vskip 0.5cm

\begin{theorem}[{\cite[Theorem~4.1]{Kollar97}}]
\label{thm:bertini}
    Let $X$ be a smooth variety over a field of characteristic zero and $|B_1,\cdots, B_k|$ the linear system spanned by the effective divisors $B_i$. Let $B\in |B_1,\cdots, B_k|$ be a general member. Then for every $x\in X$, we have
    $$\opn{mult}_x B\leq 1 +\inf_{i}\{\opn{mult}_x B_i\}.$$
\end{theorem}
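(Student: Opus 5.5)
The plan is to reduce to a dimension count on an incidence variety and then use a first-order (tangent-vector) argument; this is where characteristic zero enters, through generic smoothness. Write the linear system as $\{B_\lambda\}_{\lambda\in\bb P}$ with $\bb P=\bb P^{k-1}$. Cover $X$ by finitely many affine opens $U$ on which the common line bundle is trivial. On such a $U$ each $B_i$ is cut out by a regular function $f_i$, and $B_\lambda$ by $f_\lambda=\sum_i\lambda_i f_i$. Multiplicities are local and do not depend on the trivialization, so it suffices to prove the statement on one such $U$. Set
$$\mu(x):=\min_i \opn{mult}_x B_i .$$
This function is upper semicontinuous and takes finitely many values on $U$. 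Stratify $U$ into the locally closed sets $S_c=\{\mu=c\}$. Note that $\mu(x)=\min_{\lambda}\opn{mult}_xB_\lambda$, since every $f_\lambda$ lies in $\mfk m_x^{\mu(x)}$.

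For each $c$ consider the incidence set
$$\Sigma_c:=\{(x,\lambda)\in S_c\times\bb P \,:\, \opn{mult}_xB_\lambda\geq c+2\}.$$
It is constructible, because the condition says that all partial derivatives of $f_\lambda$ of order $\leq c+1$ vanish at $x$. The theorem follows once we show that no irreducible component of $\overline{\Sigma_c}$ dominates $\bb P$. Indeed, a general $\lambda$ then avoids the images of all $\Sigma_c$, and so $\opn{mult}_xB_\lambda\leq \mu(x)+1=1+\inf_i\opn{mult}_xB_i$ for every $x$. There are only finitely many strata and finitely many components, so a single general $\lambda$ works for all of them at once.

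Suppose, for a contradiction, that some component $\Sigma'$ of $\Sigma_c$ (intersected with the preimage of $S_c$) dominates $\bb P$. Since we are in characteristic zero, generic smoothness applies. At a general point $(x,\lambda)\in\Sigma'$ the variety $\Sigma'$ is smooth and the projection $T_{(x,\lambda)}\Sigma'\to T_\lambda\bb P$ is surjective. Fix any direction $\lambda'$, and take a curve germ $(x(t),\lambda(t))$ in $\Sigma'$ through $(x,\lambda)$ with $\dot\lambda(0)=\lambda'$. By linearity we may regard $\lambda'$ as another coefficient vector, giving the function $f_{\lambda'}$. For every multi-index $\alpha$ with $|\alpha|\leq c+1$ we have $\partial^\alpha f_{\lambda(t)}(x(t))\equiv 0$. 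Differentiating in $t$ at $t=0$ gives
$$\partial^\alpha f_{\lambda'}(x)+\sum_j \partial_j\partial^\alpha f_\lambda(x)\,\dot x_j(0)=0.$$
For $|\alpha|\leq c$, the second term vanishes, because all derivatives of $f_\lambda$ of order $\leq c+1$ vanish at $x$. Hence $\partial^\alpha f_{\lambda'}(x)=0$ for all $|\alpha|\leq c$, that is, $\opn{mult}_xB_{\lambda'}\geq c+1$. The direction $\lambda'$ was arbitrary, and the tangent vectors $\lambda'$ span all coefficient vectors modulo $\lambda$. Together with $\lambda$ itself, which has multiplicity $\geq c+2$, this forces $\opn{mult}_xB_i\geq c+1$ for every $i$. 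So $\mu(x)\geq c+1$, contradicting $x\in S_c$.

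The main technical obstacle is justifying the curve-lifting step. One must use generic smoothness of $\Sigma'\to\bb P$ (characteristic zero is essential here) so that every tangent direction of $\bb P$ lifts to a tangent vector of $\Sigma'$ at a general point. One must also check that the differentiation makes sense while $x(t)$ stays inside the stratum $S_c$. I would handle this by working with the reduced structure on $\Sigma'$ and computing in local analytic (or formal) coordinates at a smooth point of $X$, using that $X$ is smooth. Once that is in place, the stratification and constructibility bookkeeping are routine.
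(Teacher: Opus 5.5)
Your argument is correct. The paper gives no proof of this statement: it quotes Koll\'ar and uses the result only as a cited tool. So there is no internal argument to compare with, and your proposal stands as a self-contained proof.

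The steps all go through.
\begin{itemize}
\item $\Sigma_c$ is locally closed.
\item A general $\lambda$ avoids the closures of the images of all non-dominating components.
\item At a general point $(x,\lambda)$ of a dominating component $\Sigma'$, the functions $g_\alpha(y,\lambda)=\partial^\alpha f_\lambda(y)$, $|\alpha|\le c+1$, vanish on the reduced scheme $\Sigma'$. Hence each $dg_\alpha$ kills $T_{(x,\lambda)}\Sigma'$.
\item Since $d\pi$ is surjective there by generic smoothness, your formula gives $f_{\lambda'}\in\mfk m_x^{c+1}$ for every $\lambda'$. This contradicts $\mu(x)=c$.
\end{itemize}

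Two small tidy-ups.
\begin{itemize}
\item You need the statement on each of the finitely many charts $U$, not on one. Then intersect the finitely many resulting dense opens of $\bb P$.
\item The ``main technical obstacle'' you flag is not really there. First-order tangent vectors suffice, so no curve germs are needed. Also $\Sigma'\subset S_c\times\bb P$ by construction, so $c$ is constant along $\Sigma'$ and staying inside $S_c$ is never an issue.
\end{itemize}

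An equivalent way to organize the same idea splits it into two steps. Let $\mathcal B\subset U\times\bb P$ be the universal divisor, locally cut out by $F=\sum_i\lambda_if_i$.

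First, differentiating in the $\lambda_i$ returns the $f_i$, and a derivative lowers multiplicity by at most one. Hence $\opn{mult}_{(x,\lambda)}\mathcal B\le 1+\mu(x)$.

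Second, a general lemma proved by generic smoothness: if $Y\to S$ is smooth and $\mathcal B$ is a divisor on $Y$, then for general $s$ one has $\opn{mult}_y(\mathcal B|_{Y_s})=\opn{mult}_y\mathcal B$ at every $y\in Y_s$. This holds in characteristic zero.

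Your tangent-vector computation is exactly these two steps merged and carried out at a general point of a hypothetical bad component. The two-step version isolates a reusable statement: multiplicities do not jump on general fibres of a smooth family. Yours is shorter and shows explicitly where each hypothesis enters. Linearity in $\lambda$ gives the term $\partial^\alpha f_{\lambda'}(x)$, and characteristic zero gives the surjectivity of $d\pi$ at a general point.
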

\begin{remark}
    This is also known as Bertini Theorem. The classical Bertini Theorem says that on a smooth variety, a general member of a base point free linear system is again smooth. Kollar's version also covers the classical version.
\end{remark}

\vskip 0.5cm

\subsection{Local discrepancy}\par
\begin{definition}
    Let $X$ be a projective variety, $x\in X$ be a closed point. Let $R=\ml O_{X,x}$ be the local ring at $x$ and $\mfk{m}_x$ be the maximal ideal of $R$ at $x$. Let $\mfk{a}\subsetneq R$ be an ideal, we say $\mfk{a}$ is $\mfk{m}_x$-\emph{primary} if $\sqrt{\mfk{a}}=\mfk{m}_x$ where 
    $$\sqrt{\mfk{a}}=\{h\in R\,|\,h^r\in \mfk{a}\text{ for some }r\geq 1\}.$$
\end{definition}

\begin{definition}
    Let $X$ be a normal variety and $x\in X$ be a closed point. Let $\mfk{a}\subset \ml O_{X,x}$ be a nonzero ideal. Let $E$ be a prime divisor over $X$ that lives on a normal birational model $f:Y\rightarrow X$ such that $x\in c_X(E)$. If $\mfk{a}=(g_1,\cdots,g_s)$, then we define the order of $\mfk{a}$ at $E$ to be  
    $$\opn{ord}_E(\mfk{a}):=\min\limits_{1\leq i\leq s}\opn{ord}_E(g_i),$$
    where $\opn{ord}_E(g_i)$ is the coefficient of $E$ in the $\opn{div}(f^*g_i)$.
\end{definition}

\begin{definition}
 Let $X$ be a normal variety and $x\in X$ be a closed point. Let $\Delta\geq 0$ be a $\bb Q$-divisor such that $K_X+\Delta$ is $\bb Q$-Cartier and $(X,\Delta)$ is lc near $x$. For a nonzero ideal $\mfk{a}\subset \ml O_{X,x}$, we define the \emph{local LC threshold} by
 $$\opn{lct}_x(X,\Delta;\mfk{a}):=\inf\limits_{\underset{x\in c_X(P),\,\opn{ord}_P(\mfk{a})>0}{P\text{ prime over }X}}\frac{a(P,X, \Delta)}{\opn{ord}_P(\mfk{a})}.$$
 If the index set is empty, then we define $\opn{lct}_x(X,\Delta;\mfk{a})=+\infty$.
\end{definition}
\begin{remark}
Let $\mfk{a}$ be an $\mfk{m}_x$-primary ideal, then $x\in c_X(E)$ and $\opn{ord}_E(\mfk{a})>0$ if and only if $c_X(E)=\{x\}$.
\end{remark}

\begin{remark}
\label{rmk:numerical description of ideal lct}
    For an $\mfk{m}_x$-primary ideal $\mfk{a}$, choose a log resolution $f:Y\rightarrow X$ over a neighborhood of $x$ such that $K_Y+\Delta_Y=f^*(K_X+\Delta)$, where $\Delta_Y=\sum d_iE_i$ and $F=\sum r_iE_i$, $\mfk{a}\ml O_Y=\ml O_Y(-F)$ and $\opn{Supp}\Delta_Y\cup \opn{Supp}F$ is SNC. Then 
    $$\opn{lct}_x(X,\Delta;\mfk{a})=\min_{r_i>0}\frac{1-d_i}{r_i}.$$\par
    Here, $\mfk{a}$ is interpreted as a coherent ideal sheaf $\ml I$ in a neighbourhood $U$ of $x$ such that $\ml I\subseteq\ml O_U$, $\ml I_x=\mfk{a}$ and $\opn{Supp}\ml O_U/\ml I=\{x\}$. Such $\ml I$ exist since $\mfk{a}$ is primary.
\end{remark}
The following definition is due to Helmke~\cite{Hel97} and Ein~\cite{Ein97}.
\begin{definition}[Local discrepancy]
    \label{prop:numerical local discrepancy}
    Let $(X,\Delta)$ be a $\bb Q$-pair and lc near a closed point $x\in X$. Let $\mfk{m}_x$ be the ideal sheaf of $x$. Then we define the \emph{local discrepancy} $b_x(X,\Delta)$ of $(X,\Delta)$ to be
    $$b_x(X,\Delta):=\opn{lct}_x(X,\Delta;\mfk{m}_x).$$
\end{definition}

\begin{remark}
   $b=b_x(X,\Delta)$ is a rational number. Moreover, the local discrepancy satisfies $0\leq b\leq n$ and $b=0$ if and only if $\{x\}$ is the minimal center of $(X,\Delta)$.
\end{remark}

\vskip 0.5cm

\begin{definition}
    Let $X$ be a normal projective variety of dimension $n$, $\mfk{a}$ be a $\mfk{m}_x$-primary ideal. Let $R=\ml O_{X,x}$ and $\mfk{m}_x$ to be the ideal sheaf of $x$. Define the \emph{Hilbert-Samuel multiplicity} to be the positive integer $e(\mfk{a})$ such that $$l_R(R/\mfk{a}^k)=\frac{e(\mfk{a})}{n!}k^n+O(k^{n-1}).$$
\end{definition}

\begin{definition}[{\cite[Theorem~27]{Liu18}}]
    Let $X$ be a normal projective variety of dimension $n$ and $x\in X$ be a closed point. Let $R=\ml O_{X,x}$ and $\mfk{m}_x$ is the maximal ideal at $x$. Assume that $X$ is klt near $x$. Define the \emph{normalized volume} to be 
    $$\widehat{\opn{vol}}(x,X)=\inf\limits_{\mfk{a}\,\mfk{m}_x\text{-primary}}\opn{lct}_x(X;\mfk{a})^n e(\mfk{a}).$$
\end{definition}

\begin{theorem}[{\cite[Theorem~1.6]{LX19}}]
\label{thm:bdd normal vol}
    Let $x\in X$ be an $n$-dimensional klt singularity. Then $\widehat{\opn{vol}}(x,X)\leq n^n$. The equality holds if and only if $X$ is smooth at $x$.
\end{theorem}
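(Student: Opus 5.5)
This is \cite[Theorem~1.6]{LX19}, and in the paper it is quoted rather than proved. If I had to prove it, I would work in the valuative formulation. By \cite{Liu18}, $\widehat{\opn{vol}}(x,X)=\inf_v A_X(v)^n\opn{vol}(v)$, where $v$ runs over real valuations centered at $x$, $A_X$ is the log discrepancy and $\opn{vol}(v)=\lim_k n!\,l(\ml O_{X,x}/\mfk a_k(v))/k^n$.

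\textbf{Smooth case.} First I would check that equality holds at a smooth point. The test ideal $\mfk a=\mfk m_x$ gives $\opn{lct}_x(X;\mfk m_x)^n e(\mfk m_x)=n^n\cdot 1$, since $\opn{lct}_x(X;\mfk m_x)=n$ on a smooth germ. Conversely, every $\mfk m_x$-primary $\mfk a$ satisfies $\opn{lct}(\mfk a)^n e(\mfk a)\geq n^n$ by de Fernex--Ein--Musta\c{t}\u{a}. This inequality would be proved by degenerating $\mfk a$ to a monomial ideal (Gr\"obner degeneration together with lower semicontinuity of $\opn{lct}$). For monomial ideals I would use Howald's formula, which reduces it to an inequality between the Newton polyhedron's covolume and its intercept, i.e.\ AM--GM.

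\textbf{Inequality at a klt point.} I would reduce to a cone using two facts.
\begin{enumerate}
\item The infimum is attained by a quasi-monomial valuation $v$.
\item By Li--Xu, one may approach $\widehat{\opn{vol}}(x,X)$ by valuations $\opn{ord}_S$ of Kollár components $S$, i.e.\ plt blow-ups $Y\to X$ with exceptional divisor $S$ such that $(S,\opn{Diff}_S)$ is log Fano.
\end{enumerate}
For such an $S$, the comparison $\widehat{\opn{vol}}(x,X)\le A_X(S)^n\opn{vol}(\opn{ord}_S)$ and adjunction give
$$A_X(S)^n\opn{vol}(\opn{ord}_S)=A_X(S)^n\,(-S|_S)^{n-1}.$$
Here $-(K_S+\opn{Diff}_S)=-A_X(S)\,S|_S$, so this equals $\opn{vol}\bigl(-(K_S+\opn{Diff}_S)\bigr)\cdot A_X(S)/\text{(index factor)}$. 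The estimate then reduces to a global volume bound on the $(n-1)$-dimensional log Fano pair $(S,\opn{Diff}_S)$. Concretely, I would aim for
$$A_X(S)\cdot (-S|_S)^{n-1}\cdot A_X(S)^{n-1}\leq n^n.$$
When $\opn{ord}_S$ minimizes (so $S$ is K-semistable), Liu's/Fujita's bound $\opn{vol}(-K_S-\opn{Diff}_S)\leq n^{n-1}$ gives the needed estimate. Alternatively, I would degenerate $X$ to the orbifold cone $C(S,-S|_S)$; $\widehat{\opn{vol}}$ does not increase under this degeneration (lower semicontinuity), and the vertex computation is explicit.

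\textbf{Equality case (main obstacle).} If $\widehat{\opn{vol}}(x,X)=n^n$, then the minimizer is a Kollár component $S$ which is K-semistable with volume exactly $n^{n-1}$. Fujita's rigidity then gives $(S,\opn{Diff}_S)\cong(\bb P^{n-1},0)$ with $-S|_S=\ml O(1)$. The cone over $(\bb P^{n-1},\ml O(1))$ is $\bb A^n$. Since the associated graded ring of $\ml O_{X,x}$ with respect to $\opn{ord}_S$ is a polynomial ring, $\ml O_{X,x}$ is regular, so $X$ is smooth at $x$.

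The delicate step is the existence and K-semistability of the minimizing Kollár component. That is the deep input from Li--Xu and Blum that I would simply cite.
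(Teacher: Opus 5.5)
The paper does not prove this statement; it quotes \cite{LX19}. Judged on its own, your sketch has a genuine gap at the inequality step.

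\textbf{The inequality step does not close.} The identity you reach is exact, with no ``index factor'':
$A_X(S)^n\opn{vol}(\opn{ord}_S)=A_X(S)^n(-S|_S)^{n-1}=A_X(S)\cdot\opn{vol}(-(K_S+\opn{Diff}_S))$.
So Fujita's bound $\opn{vol}(-(K_S+\opn{Diff}_S))\le n^{n-1}$ gives only $\widehat{\opn{vol}}(x,X)\le n^{n-1}A_X(S)$. Nothing in your argument bounds $A_X(S)$ by $n$, and this fails even for a minimizing, K-semistable Kollár component. Take $x_1^2+x_2^2+x_3^2+x_4^3=0$ in $\bb C^4$ (a klt threefold, $n=3$).
\begin{itemize}
\item The $(3,3,3,2)$-weighted blow-up extracts $S\cong\bb P^2$ with $\opn{Diff}_S=\frac23C$ for a conic $C$, and $-S|_S\equiv\frac13H$ for a line $H$.
\item Hence $A_X(S)=5$.
\item $(\bb P^2,\frac23C)$ is K-polystable since $\frac23<\frac34$, so $\opn{ord}_S$ is the minimizer.
\end{itemize}
Your route therefore gives only $9\cdot5=45$ instead of $27$, although the actual value is $125/9$.

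\textbf{Two further problems.} First, the minimizer need not be divisorial; it can have rational rank $>1$, e.g.\ for the anticanonical cone over $\mathrm{Bl}_p\bb P^2$. The Kollár components that merely approximate $\widehat{\opn{vol}}(x,X)$ are not K-semistable, so Fujita's bound says nothing about them. Second, your alternative via degeneration to $X_0=C(S,-S|_S)$ runs the wrong way. Lower semicontinuity gives $\widehat{\opn{vol}}(o,X_0)\le\widehat{\opn{vol}}(x,X)$, which is a lower bound for $\widehat{\opn{vol}}(x,X)$, not an upper bound.

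\textbf{How to get the inequality.} If you grant Blum--Liu's lower semicontinuity, apply it to the points of $X$ itself, e.g.\ to $X\times X\to X$ with the diagonal section. The function $y\mapsto\widehat{\opn{vol}}(y,X)$ equals $n^n$ on the dense smooth locus by your smooth-case computation. Hence it is $\le n^n$ at $x$, with no Kollár components needed.

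\textbf{The equality case.} It inherits the same defect.
\begin{itemize}
\item From $A_X(S)\opn{vol}(-(K_S+\opn{Diff}_S))=n^n$ and Fujita's bound you get only $A_X(S)\ge n$, not $\opn{vol}=n^{n-1}$.
\item You assume without justification that the minimizer is divisorial.
\item The rigidity you need is for singular log Fano pairs $(S,\opn{Diff}_S)$, not Fujita's smooth statement.
\end{itemize}
So this part needs a genuinely different argument. Only your final step goes through: a polynomial graded ring generated in degree one forces $\ml O_{X,x}$ to be regular.

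\textbf{A smaller point in the smooth case.} A single Gr\"obner degeneration of $\mfk a$ can lower $\opn{lct}$ while raising $e$, so it controls nothing by itself. de Fernex--Ein--Musta\c{t}\u{a} instead degenerate the graded system $\{\mfk a^k\}$ to $\{\opn{in}(\mfk a^k)\}$, which preserves colengths.
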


\subsection{Discriminant and moduli $b$-divisors}
This part contains the work of Fujino and Hashizume. For more details, see \cite{fujino23}.

    \begin{definition}[$b$-divisor]
    Let $X$ be a normal variety and let $\opn{Div}X$ be the space of Weil divisors on $X$. A \emph{b-divisor} on $X$ is an element $$\mathbf{D}\in\mathbf{Div}X=\lim\limits_{Y\rightarrow X}\opn{Div} Y,$$
    where the projective limit is taken over all proper birational morphism $f:Y\rightarrow X$ from a normal variety $Y$ under the pushforward homomorphism $f_*:\opn{Div}Y\rightarrow \opn{Div}X$. We can define $\bb Q$-$b$-divisors on $X$ similarly.\par
    If $\mathbf{D}=\sum d_i D_i$ is a $\bb Q$-$b$-divisor on normal variety $X$ and $f:Y\rightarrow X$ is a proper birational morpshim from a normal variety $Y$, then we define the \emph{trace} of $\mathbf{D}$ on $Y$ to be the $\mathbb{Q}$-divisor $$\mathbf{D}_Y:=\sum\limits_{D_i\text{ is a divisor on }Y}d_iD_i.$$
    The $\bb Q$ \emph{Cartier closure} of a $\bb Q$-Cartier $\bb Q$-divisor $D$ on a normal variety $X$ is the $\bb Q$-$b$-divisor $\overline{\mathbf{D}}$ with trace $(\overline{\mathbf{D}})_Y=f^*D$, where $f:Y\rightarrow X$ is a proper birational morphism from a normal variety $Y$.\par
    If $F$ is a prime divisor over $X$, we set $\opn{mult}_F \mathbf{D}:=\opn{mult}_F \mathbf{D}_{Y}$ where $Y$ is any birational model on which $F$ appears. We say that $\mathbf{D}$ \emph{descends} to $Y$ if $\mathbf{D}=\overline{\mathbf{D}_Y}$.
\end{definition}

\begin{definition}[Canonical-$b$-divisor]
    Let $X$ be a normal variety and let $\omega$ be a nonzero top rational differential form of $X$. Then $(\omega)$ defines a $b$-divisor $\mathbf{K}_X$. We call $\mathbf{K}_X$ the \emph{canonical b-divisor} of $X$.
\end{definition}

The following proposition is \cite[Proposition~2.7]{Han26}. For the proof and details, also see \cite[Theorem~1.2, Definition~1.3]{fujino23}
\begin{proposition}[{\cite[Proposition~2.7]{Han26}}]\label{thm:adjunction and inverse of adjunction}
    Let $(X,\Delta)$ be a $\bb Q$-pair and $W\subset X$ be an LC center of $(X,\Delta)$. Assume that $(X,\Delta)$ is lc over the generic point of $W$. Let $\nu: Z\rightarrow W$ be the normalization. Then there exist a \emph{discriminant b-divisor} $\mathbf{B}$ and a \emph{moduli b-divisor} $\mathbf{M}$ over $Z$ with the following properties.
    \begin{enumerate}
        \item The trace $\mathbf{B}_Z$ is effective and 
        $$\nu^*((K_X+\Delta)|_W)\sim_{\bb Q}K_Z+\mathbf{B}_Z+\mathbf{M}_Z.$$
        \item There exist a smooth quasi-projective variety $Z^\p$ and a projective birational morphism $p:Z^\p\rightarrow Z$ such that $\mathbf{K}_Z+\mathbf{B}=\overline{K_{Z^\p}+\mathbf{B}_{Z^\p}}$, $\mathbf{M}=\overline{\mathbf{M}_{Z^\p}}$, and 
        $$K_{Z^\p}+\mathbf{B}_{Z^\p}+\mathbf{M}_{Z^\p}\sim_{\bb Q}p^*\nu^*((K_X+\Delta)|_W).$$\par
        Moreover, if $Z$ is projective, then $Z^\p$ is projective and $\mathbf{M}_{Z^\p}$ is nef.
        \item Let $\widetilde{Z}\rightarrow Z$ be a birational model and let $P$ be a prime divisor $\widetilde{Z}$. For every LC place $T$ of $(X,\Delta)$ with center $W$, choose a log resolution $\pi:\widetilde{X}\rightarrow X$ on which $T$ appears and for which the induced rational map $T\dashrightarrow \widetilde{Z}$ is a morphism $f_T:T\rightarrow \widetilde{Z}$. If $K_T+\Delta_T=(K_{\widetilde{X}}+\Delta_{\widetilde{X}})|_T$, then
       $$1-\opn{mult}_P \mathbf{B}=\inf_{T}\sup\left\{ \lambda\in \bb R\,|\,(T,\Delta_T+\lambda f^*_TP)\text{ is sub-lc over }\eta_p\right\}.$$
    \end{enumerate}
\end{proposition}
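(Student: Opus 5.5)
The plan is to reduce the statement to the canonical bundle formula for an lc-trivial fibration, following the construction in \cite{fujino23}.

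\textbf{Construction.} Shrinking $X$ around the generic point of $W$ if necessary, take a log resolution $\pi:\widetilde X\to X$ of $(X,\Delta)$ and write $K_{\widetilde X}+\Delta_{\widetilde X}=\pi^*(K_X+\Delta)$. Since $W$ is an LC center and $(X,\Delta)$ is lc over the generic point of $W$, there is at least one prime divisor $T$ on $\widetilde X$ with $\operatorname{coeff}_T\Delta_{\widetilde X}=1$ and $\pi(T)=W$. After further blowing up we may assume the following.
\begin{enumerate}
\item The map $T\to W$ factors through the normalization $\nu:Z\to W$, because $T$ is normal.
\item The induced morphism $f_T:T\to Z$ is such that $K_T+\Delta_T:=(K_{\widetilde X}+\Delta_{\widetilde X})|_T\sim_{\mathbb Q}f_T^*\nu^*((K_X+\Delta)|_W)$.
\end{enumerate}
Taking the Stein factorization of $f_T$ and shrinking, $(T,\Delta_T)\to Z$ is an lc-trivial (sub-lc over the generic point) fibration in the sense of Ambro and Fujino--Gongyo. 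If several LC places over $W$ are relevant, one works with the union of all such $T$, or more conveniently with a dlt modification, so that all of them are treated at once.

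\textbf{Definitions of $\mathbf B$ and $\mathbf M$.} For a prime divisor $P$ on a birational model $\widetilde Z\to Z$, I would define $\operatorname{mult}_P\mathbf B$ by the lc-threshold recipe in (3): for each $T$ take the largest $\lambda$ such that $(T,\Delta_T+\lambda f_T^*P)$ is sub-lc over $\eta_P$, then take the infimum over all LC places $T$ with center $W$. The moduli part is then defined by
\[
\mathbf M:=\overline{\nu^*((K_X+\Delta)|_W)}-\mathbf K_Z-\mathbf B,
\]
so the $\mathbb Q$-linear equivalences in (1) and (2) hold by construction. Effectivity of $\mathbf B_Z$ follows from an inversion-of-adjunction type argument: $\Delta_{\widetilde X}\ge$ its image under pullback, so every threshold $\lambda$ is at most $1$. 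Independence of the choices of $\pi$ and $T$ follows because log discrepancies are birational invariants, and the infimum runs over all LC places with center $W$.

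\textbf{Descent and nefness.} Here I would invoke Ambro's theorem and its lc generalization by Fujino--Hashizume: after passing to a sufficiently high smooth model $p:Z'\to Z$ on which the discriminant of the fibration is SNC, $\mathbf K_Z+\mathbf B$ and $\mathbf M$ descend to $Z'$, and $\mathbf M_{Z'}$ is nef when $Z$ is projective. The underlying mechanism is semipositivity of Hodge bundles, via Fujino's theory of variations of mixed Hodge structure for lc (non-klt) fibers.

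\textbf{Main obstacle.} I expect the hardest step to be this descent together with the nefness of $\mathbf M_{Z'}$ in the genuinely lc setting. There the fibers of $T\to Z$ may be reducible and non-klt, and one must handle several LC places simultaneously. Reconciling the infimum over $T$ with a single fibration is most naturally done through a dlt model, where the union of LC places over $W$ becomes a connected semi-dlt space by Koll\'ar's connectedness. I would rely on \cite[Theorem~1.2]{fujino23} for exactly this part rather than reproving it.
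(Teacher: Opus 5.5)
Your strategy is the paper's. The paper proves nothing here: it imports the statement from \cite[Proposition~2.7]{Han26} and \cite[Theorem~1.2, Definition~1.3]{fujino23}, and you likewise defer descent and nefness to Fujino--Hashizume. The trouble is in the parts you argue yourself.

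First, a single LC place $T$ does not in general give an lc-trivial fibration over $Z$, because the Stein factorization of $f_T$ can be a nontrivial finite cover of $Z$. For instance, let $X=\bb A^3$, $\Delta=\{x^2=ty^2\}$, and let $W$ be the $t$-axis. On $\opn{Bl}_WX$, the strict transform of $\Delta$ and the exceptional divisor $E$ form a log smooth boundary with coefficients $1$, so $E$ is an LC place over $W$. These two divisors meet along a curve $C$, which in the chart $x=yu$ is $\{y=0,\ u^2=t\}$, a double cover of $W$. The exceptional divisor $T$ of the blow-up of $C$ is an LC place with centre $W$, and $T\to W$ factors through $C$. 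So you cannot run the canonical bundle formula on one $T$ over $Z$. You need a single fibration $h\colon V\to Z$ with $h_*\ml O_V=\ml O_Z$, built from the union $V$ of the LC places over $W$ on a fixed model; this is what Fujino--Hashizume do, as a basic slc-trivial fibration. Your hedge points in this direction but does not carry it out.

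More substantively, taking the infimum in (3) as the \emph{definition} of $\mathbf B$ turns (3) into a tautology. Meanwhile, the descent and nefness you quote for (2) are statements about the $\mathbf B$ that Fujino--Hashizume construct from the fixed fibration $V\to Z$. To import (2) you must prove that the two b-divisors coincide, and that is exactly the content of (3). It has two parts.
\begin{enumerate}
\item The sub-slc threshold of $(V,\Delta_V+\lambda h^*P)$ over $\eta_P$ equals the minimum of the thresholds of the components of $V$. This holds because sub-slc is tested on the normalization, i.e.\ componentwise with the adjunction boundaries $\Delta_T$.
\item No LC place $T'$ with centre $W$ has a smaller threshold. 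This follows by realizing $T'$ as a component of the union on a higher model and using that the construction does not depend on the model.
\end{enumerate}
Your remark that log discrepancies are birational invariants only shows that the threshold of one fixed $T$ does not depend on where $T$ is realized. It does not compare different LC places.

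Two smaller points. The effectivity argument does not work as written. On a log resolution, $\Delta_T$ can have negative coefficients along divisors of $T$ lying over $P$ (blow up a point of $T$). The bound $\lambda\le 1$ really comes from crepant invariance of the threshold together with a model on which $T$ carries an effective boundary, e.g.\ a dlt modification extracting $T$, where $\Delta_T$ is the different. Also, you may not shrink $X$ around the generic point of $W$. $\mathbf B$ must be defined along every divisor over $Z$, including those over $W\cap\opn{Nlc}(X,\Delta)$, and the last clause of (2) is global.
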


%% file: MethodForSingularpoint.tex
\section{Canonical contraction and Pluricanonical Linear System}
In this section, we modify the LC-center-cutting technique to singular case.
Proposition~\ref{prop:canonical contraction} allows us to pass our question to canonical model $Y$. However, $Y$ maybe singular, so we need Proposition \ref{prop:initial singular pair}, Lemma~\ref{lem:singular-cut} and Lemma~\ref{lem:improved-finish} to cut LC centers in singular case. And Proposition~\ref{prop:improved-singular} is the final outcome of Section~3.\par
\vskip 0.5cm
We first introduce our ideal case.
\begin{proposition}
\label{prop:isolated-nadel}
Let $X$ be a normal projective variety. Let $D$ be a Cartier divisor on $X$. Let $G\geq0$ be an effective $\bb Q$-divisor such that $K_X+G$ is $\bb Q$-Cartier. Assume the following:
\begin{enumerate}
    \item  $(X,G)$ is lc near $x$;
    \item  $\{x\}$ is the unique lc center through $x$;
    \item  $D-(K_X+G)$ is nef and big. 
\end{enumerate}
Then there exist $s\in H^0(X,\ml O_X(D))$ such that $s(x)\neq 0$ 
\end{proposition}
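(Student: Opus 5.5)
The plan is the standard Nadel vanishing argument sketched in the outline. Set $\ml J=\ml J(X,G)$ and let $T\subset X$ be the closed subscheme it defines. By Theorem~\ref{thm:nadel}, its support is $\opn{Nklt}(X,G)$. The first step is to show that $x$ is an isolated point of $T$. Since $(X,G)$ is lc on a neighbourhood $U$ of $x$, the set $\opn{Nklt}(X,G)\cap U$ is the union of the LC centers of $(X,G|_U)$. By hypothesis (2), the only LC center through $x$ is $\{x\}$. LC centers are closed, and there are finitely many of them, since they are centers of divisors with log discrepancy $0$ on a fixed log resolution. Hence the union of the LC centers not containing $x$ is a closed set missing $x$. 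Shrinking $U$, we get $\opn{Nklt}(X,G)\cap U=\{x\}$. Therefore $T=T_0\sqcup T_1$ as schemes, where $T_0$ is supported at $x$ and $T_1$ is a closed subscheme not containing $x$.

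The second step is vanishing. Write $D\equiv K_X+G+(D-K_X-G)$, where $M:=D-(K_X+G)$ is a nef and big $\bb Q$-Cartier $\bb Q$-divisor by hypothesis (3), $G\ge0$, and $K_X+G$ is $\bb Q$-Cartier. Theorem~\ref{thm:nadel} then gives $H^1(X,\ml O_X(D)\otimes\ml J)=0$. From the exact sequence
$$0\to \ml O_X(D)\otimes\ml J\to \ml O_X(D)\to \ml O_X(D)\otimes\ml O_T\to 0,$$
the restriction map $H^0(X,\ml O_X(D))\to H^0(T,\ml O_X(D)|_T)$ is surjective. (Here $D$ is Cartier, so $\ml O_X(D)$ is invertible and tensoring preserves exactness.)

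The final step is to produce the section. Since $T$ is the disjoint union of $T_0$ and $T_1$, we have $H^0(T,\ml O_X(D)|_T)=H^0(T_0,\ml O_X(D)|_{T_0})\oplus H^0(T_1,\ml O_X(D)|_{T_1})$. On the Artinian scheme $T_0$, the line bundle is trivial, so take the section that corresponds to $1\in\ml O_{T_0}$, and take $0$ on $T_1$. Its value at $x$ is nonzero because $T_0$ is nonempty: $x\in\opn{Nklt}$ by hypothesis (2), so $\ml J_x\subset\mfk m_x$. Lift it to $s\in H^0(X,\ml O_X(D))$. Since $\ml J_x\subseteq\mfk m_x$, the evaluation of $s$ at $x$ factors through $\ml O_{T_0}$, so $s(x)\neq0$.

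The only delicate point is the first step, namely that $x$ is isolated in the non-klt locus. It rests on the finiteness of LC centers near $x$ and on the lc hypothesis near $x$, which rules out non-lc loci passing through $x$. Everything else is formal.
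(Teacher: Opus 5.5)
Your proposal is correct and follows the paper's proof essentially step by step. You isolate $x$ in $\opn{Supp}T=\opn{Nklt}(X,G)$, split $T=T_0\sqcup T_1$, take the unit section on the Artinian piece and $0$ on the rest, and lift it using Nadel vanishing of $H^1(X,\ml O_X(D)\otimes\ml J)$.

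Your justification of the isolation step, which the paper merely asserts, is sound in substance, but the parenthetical reason is slightly off. LC centers also arise as images of strata (e.g.\ intersections of coefficient-one components) on a log resolution, not only as images of divisors there. They are nevertheless finitely many; equivalently, $\opn{Nklt}(X,G)=f(\opn{Supp}G_Y^{\geq 1})$ for a log resolution $f\colon Y\to X$ with $K_Y+G_Y=f^*(K_X+G)$. That finiteness is all your argument needs.
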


\begin{proof}
Let $\ml{J}:=\ml{J}(X,G)$.  Consider a log resolution $f:Y\rightarrow X$. 
By the definition of multiplier sheaf, $\ml J(X,G)=f_*\ml{O}_Y\bigl( \lceil K_Y-f^*(K_X+G)\rceil \bigr)$. This is a coherent ideal sheaf in $\ml O_X$. We know that $(X,G)$ is klt near $x\in X$ if and only if $\ml J(X,G)_x=\ml O_{X,x}$. Consider the closed subscheme $T:=V(\ml J)$, the sheaf $\ml O_T=\ml O_X/\ml J$ has support $\opn{Nklt}(X,G)$.\par
By our assumption, $(X,G)$ is lc near $x$, the non-klt locus is the union of all LC centers intersecting with the neighbourhood. Moreover, since $\{x\}$ is the only LC center through $x$, we can shrink $U$ such that $\opn{Supp}T\cap U=\{x\}$. Since $x$ is an isolated point of $\opn{Supp}T$, we have 
$$
 \ml{O}_T\simeq \ml{O}_{T_x}\oplus\ml{O}_{T'}.
$$
Note that $T_x$ is zero-dimensional and Noetherian, hence is the spectrum of an Artin local ring $A=\ml O_{X,x}/\ml J_x$ and its residue field is $k(x)$.  Choose an open neighbourhood $V$ of $x$ such that the $\ml O_X(D)|_V\cong \ml O_V$. Restricting to $T_x$ gives $\ml O_X(D)|_{T_x}\cong \ml O_{T_x}$. The identity element $1\in A$ determines a section $e\in H^0(T_x,\ml O_X(D)|_{T_x})$. So $s:=(e,0)\in H^0(T,\ml O_X(D)|_T)$ satisfies $s(x)\neq 0$.\par
Next, we want to lift $s$ from $T$ to the whole $X$. We have the following exact sequence:
$$0\rightarrow \ml J\rightarrow \ml O_X\rightarrow \ml O_T\rightarrow 0.$$
Since $\ml O_X(D)$ is locally free, tensoring $\ml O_X(D)$ preserves exactness:
$$0\rightarrow \ml O_X(D)\otimes \ml J\rightarrow \ml O_X(D) \rightarrow \ml O_X(D)|_T\rightarrow 0.$$
So we have a cohomology sequence
$$H^0(X,\ml O_X(D))\rightarrow H^0(T,\ml O_X(D)|_T)\rightarrow H^1(X,\ml O_X(D)\otimes \ml J).$$\par
Since $D-K_X-G$ is nef and big, by Theorem~\ref{thm:nadel}, Nadel Vanishing gives $H^1(X,\ml{O}_X(D)\otimes\ml{J})=0$.
\end{proof}

\begin{proposition}
\label{prop:canonical contraction}
    Let $X$ be a smooth projective variety with $K_X$ nef and big. Then there exist a projective birational morphism $f:X\rightarrow Y$ such that:
    \begin{enumerate}
        \item $Y$ is a normal projective variety, $f_*\ml O_X=\ml O_Y$ and $f$ has connected fibers;
        \item $K_Y$ is an ample Cartier divisor and under a suitable choice of rational $n$-form, we have $K_X=f^*K_Y$;
        \item $Y$ is canonical and $f$ is isomorphic on $Y_{reg}$;
        \item $H^0(Y,mK_Y)\cong H^0(X,mK_X)$ and $\opn{Bs}|mK_X|=f^{-1}(\opn{Bs}|mK_Y|)$.
    \end{enumerate}
\end{proposition}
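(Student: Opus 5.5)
The plan is to take $Y$ to be the canonical model $Y=\opn{Proj}R(X,K_X)$ and $f$ the canonical morphism. First, since $X$ is smooth with $K_X$ nef and big, the basepoint free theorem of Kawamata--Shokurov applies with $D=K_X$ and $D-K_X=0$: $K_X$ is nef, and $aK_X-K_X=(a-1)K_X$ is nef and big for $a\geq 2$. Hence $|mK_X|$ is base point free for all $m\gg0$, and $R(X,K_X)$ is finitely generated. I define $f$ as the Stein factorization of $\phi_{|mK_X|}$ for $m$ large and divisible. Then $f_*\ml O_X=\ml O_Y$, $Y$ is normal and projective, and $f$ has connected fibers. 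Since $K_X$ is big, $f$ is birational, and $mK_X=f^*H$ with $H$ ample, and $Y\cong\opn{Proj}R(X,K_X)$. This gives item (1).

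For item (2), I push forward: $K_Y=f_*K_X$ as Weil divisors, after choosing the rational $n$-form on $X$ and its image on $Y$. Then $mK_Y=f_*f^*H=H$ is Cartier and ample, so $K_Y$ is $\bb Q$-Cartier. Writing $K_X=f^*K_Y+E$ with $E$ exceptional, $E\equiv_f 0$, so the negativity lemma gives $E=0$ and hence $K_X=f^*K_Y$. Thus $a(E,Y,0)\geq1$ for every exceptional $E$ (discrepancies are computed on the smooth model $X$ with no exceptional coefficients), so $Y$ is canonical.

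The main obstacle is that $K_Y$ is Cartier, not merely $\bb Q$-Cartier. Here I would invoke that canonical singularities are rational (Elkik), so $f_*\ml O_X(K_X)=\ml O_Y(K_Y)$ and $f_*\ml O_X=\ml O_Y$. Locally, $\ml O_Y(K_Y)=f_*\ml O_X(f^*K_Y)=f_*f^*\ml O_Y(mK_Y)^{1/m}$ formally. More concretely, $\ml O_X(K_X)$ is a line bundle trivial on fibers (it is $f$-numerically trivial and a power descends). On a rational singularity, an $f$-numerically trivial line bundle descends, by Kollár--Mori type descent: $\opn{Pic}$ of the fibers is controlled since $R^1f_*\ml O_X=0$. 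So $\ml O_X(K_X)\cong f^*L$ with $L=f_*\ml O_X(K_X)=\ml O_Y(K_Y)$ invertible.

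For item (3), I argue that $f$ is an isomorphism over $Y_{reg}$. If $y$ is smooth and $f^{-1}(y)$ is positive dimensional, the exceptional divisor over $y$ has discrepancy at least $\opn{codim}-1\geq1>0$. This contradicts $K_X=f^*K_Y$ having zero coefficient, since the exceptional locus of a birational morphism to a smooth variety is divisorial (van der Waerden purity). Finally, for item (4), the projection formula gives $H^0(X,mK_X)=H^0(Y,f_*f^*\ml O_Y(mK_Y))=H^0(Y,mK_Y)$ because $f_*\ml O_X=\ml O_Y$. Under this identification, $s$ vanishes at $p$ iff the corresponding section vanishes at $f(p)$, since $\ml O_X(mK_X)=f^*\ml O_Y(mK_Y)$ is a pullback of a line bundle. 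Hence $\opn{Bs}|mK_X|=f^{-1}(\opn{Bs}|mK_Y|)$.
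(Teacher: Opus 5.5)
\paragraph{The gap: Cartierness of $K_Y$.} You never show that $K_Y$ is Cartier, only that it is $\bb Q$-Cartier, and this is exactly the step you flagged as the main obstacle. Knowing $mK_X\sim f^*H$ tells you that $\mathcal{O}_X(K_X)$ restricted to a fibre is numerically trivial and torsion. It does not tell you it is trivial.

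The descent principle you then invoke is false once fibres have dimension at least two. The principle is: an $f$-numerically trivial line bundle over a rational singularity is a pullback because $R^1f_*\mathcal{O}_X=0$. But vanishing of $R^1f_*\mathcal{O}_X$ can only rule out continuous families of line bundles on the fibre, not torsion ones. Descent lemmas of this kind work for curve fibres, where $H^1(\mathcal{O}_F)=0$ forces $\operatorname{Pic}(F)$ to be detected by degrees.

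Here is a concrete counterexample. Let $S$ be an Enriques surface and $H$ very ample, and let $f\colon X=\operatorname{Tot}(\mathcal{O}_S(-H))\to Y$ contract the zero section $E\cong S$ to the vertex of the cone. Then $R^if_*\mathcal{O}_X=\bigoplus_{k\geq 0}H^i(S,kH)=0$ for $i>0$, so $Y$ has a rational singularity. Let $L$ be the pullback of $K_S$ under the bundle projection $X\to S$. It is $f$-numerically trivial, but $L|_E\cong K_S\not\cong\mathcal{O}_S$, so $L$ does not descend.

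Canonical singularities of index $>1$ exist, so index one has to come from the specific situation, and your argument does not supply it. Without it, the Cartier part of (2) is unproved. In (4), $\mathcal{O}_X(mK_X)\cong f^*\mathcal{O}_Y(mK_Y)$ is then only justified for $m$ divisible by the index. The later sections also need $K_Y$ to be an ample \emph{Cartier} divisor.

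\paragraph{The fix: two coprime multiples.} This is the trick the paper uses. By the basepoint free theorem, choose $p$ with both $|pK_X|$ and $|(p+1)K_X|$ free. The paper takes $f$ to be the Stein factorization of $(\varphi_p,\varphi_{p+1})$. With your $f$ it suffices to note that both bundles are free and have degree zero on curves in the connected fibres of $f$; by rigidity and $f_*\mathcal{O}_X=\mathcal{O}_Y$ they are pullbacks. This gives line bundles $\mathcal{A}_p,\mathcal{A}_{p+1}$ on $Y$ with $f^*\mathcal{A}_p\cong\mathcal{O}_X(pK_X)$ and $f^*\mathcal{A}_{p+1}\cong\mathcal{O}_X((p+1)K_X)$.

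Then $\mathcal{L}=\mathcal{A}_{p+1}\otimes\mathcal{A}_p^{-1}$ is a line bundle with $f^*\mathcal{L}\cong\mathcal{O}_X(K_X)$. It agrees with $\mathcal{O}_Y(K_Y)$ outside a closed set of codimension at least two, so $\mathcal{O}_Y(K_Y)\cong\mathcal{L}$ by reflexivity. It is ample since $\mathcal{L}^{m}\cong\mathcal{O}_Y(H)$. One could instead kill the torsion using Takayama's simple connectedness of resolution fibres of klt singularities, but that is much heavier.

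With this inserted, the rest of your proof is fine. Your negativity-lemma argument for $K_X=f^*K_Y$ is a valid substitute for the paper's principal-divisor argument. Your purity/discrepancy argument that $f$ is an isomorphism over $Y_{\mathrm{reg}}$ is a valid substitute for the paper's Jacobian argument.
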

\begin{proof}
\emph{Step 1: Construct $Y$ and prove condition~(1).} Since $X$ is smooth, $(X,0)$ is klt and $K_X$ is Cartier. Apply \cite[Theorem~1.3]{Fuj09} to $L=K_X$ and $a=2$. Since $(X,0)$ is klt, it has no LC centers, so the log big is automatic. Then $\ml O_X(pK_X)$ is free for sufficiently large $p$. Therefore, we can choose a sufficiently large $p$ such that $pK_X$ and $(p+1)K_X$ are both free. Since $K_X$ is big and $pK_X$ is free, the map $\varphi_p:X\rightarrow \bb P^{N_p}$ is generically finite onto its image.\par
Consider the map $\Phi=(\varphi_p,\varphi_{p+1}):X\rightarrow \bb P^{N_p}\times \bb P^{N_{p+1}}$. Let $Z$ be its reduced image. Since $\varphi_p$ and $\varphi_{p+1}$ is generically finite, $\Phi$ is also generically finite.\par
Since $X$ is a projective variety and $Z$ is seperated, $\Phi$ is proper. So take Stein factorization
$$X\xrightarrow{f}Y\xrightarrow{g}Z,$$
where $g$ is finite, $f_*\ml O_X=\ml O_Y$ and $f$ is proper with connected fibers. Since $X$ is smooth and $\Phi$ is generically finite, $Y$ is the normalization of $Z$ in $K(X)$. Thus $Y$ is normal. Moreover, since $\Phi$ is proper and generically finite, we conclude that $K(Y)=K(X)$, which means $f$ is birational. Furthermore, $Y$ is a projective variety since $g$ is finite and $Z$ is projective. Given a closed immersion $i:X\hookrightarrow \bb P^r$, define $pr_Y\circ(i,f):X\rightarrow \bb P^r\times Y\rightarrow Y$. We can factorize $(i,f)$ as
$$X\xrightarrow{\gamma_f}X\times Y\xrightarrow{(i,\opn{id})}\bb P^r\times Y,$$ 
where $\gamma_f(x)=(x,f(x))$. Since $Y$ is seperated, the diagonal map is a closed immersion and the graph is obtained by pulling back the diagonal. So $\gamma_f$ is a closed immersion. The second map is the base change of the closed immersion $i$ by $Y\rightarrow \opn{Spec}\bb C$. So $f=pr_Y\circ (i,f)$ is projective.\par
\medskip
\emph{Step 2: Construct an intermediate line bundle and prove condition~(2).} For (2), we need to construct an ample line bundle $\ml L$ on $Y$ such that $f^*\ml L=\ml O_X(K_X)$. Consider $\ml A_p:=g^*(\ml O(1,0)|_Z)$ and $\ml A_{p+1}:= g^*(\ml O(0,1)|_Z)$. By construction, $f^*\ml A_p=\ml O_X(pK_X)$ and $f^*\ml A_{p+1}=\ml O_X((p+1)K_X)$. Therefore, the line bundle $\ml L=\ml A_{p+1}\otimes \ml A_p^{-1}$ satisfies $f^*\ml L=\ml O_X(K_X)$. By the projection formula, we have
$$\ml L^{\otimes (2p+1)}\cong \ml A_p\otimes\ml A_{p+1}=g^*(\ml O(1,1)|_Z).$$\par
Note that $\ml O(1,1)|_Z$ is ample, and its pullback by the finite morphism $g$ is also ample. Hence $\ml L$ is ample.\par
Next, we want to prove that $\ml L\cong\ml O_Y(K_Y)$ and $K_X=f^*K_Y$. Note that a proper birational morphism to a normal variety is isomorphic over an open subset $U$ satisfying $\opn{codim}_Y Y-U\geq 2$, so we have $f^{-1}(U)\cong U$ and $U\subset X$ is smooth. Then restricting to $f^{-1}(U)$ gives
$$\ml L|_{U}\cong \omega_U.$$
On the other hand, $\ml O_Y(K_Y)|_U\cong \omega_U$. So $\ml L|_U \cong \ml O_Y(K_Y)|_U$. Note that $\ml L$ and $\ml O_Y(K_Y)$ are both reflexive sheaves, so this isomorphism extends uniquely to the whole $Y$. In other words, we have $\ml O_Y(K_Y)\cong \ml L$. Therefore $K_Y$ is Cartier and ample.\par
Since $K(X)=K(Y)$, after a suitable choice of nonzero rational $n$-form $\omega$ , we may assume $K_X=\opn{div}_X(\omega)$ and $K_Y=\opn{div}_Y(\omega)$. Since $f$ is isomorphic in an open subset of $Y$ whose complement has codimension at least $2$, we have $f_*K_X=K_Y$. Define $D:=K_X-f^*K_Y$, then $D$ is a Cartier divisor and $D\sim 0$. Thus there exist a nonzero rational function $g\in K(X)^*$ such that $D=\opn{div}_X(g)$. We want to prove that $D=0$.\par
Pushing-forward through $f$, we have $f_*D=f_*K_X-f_*f^*K_Y=0$. And since $f$ is isomorphic in codim $1$, we have $f_*\opn{div}_X(g)=\opn{div}_Y(g)=0$. $Y$ is normal, so $g$ is also a unit in $H^0(Y,\ml O_Y)$. Then pullback by $f$, we have $D=\opn{div}_X(g)=0$. Therefore $K_X=f^*K_Y$. This proves (2).\par
\medskip
\emph{Step 3: Show that $Y$ has canonical singularity.} Let $P$ be an arbitrary prime divisor over $Y$. On a common smooth birational model $h:H\rightarrow X$ where $P$ appears, we have $K_H-(f\circ h)^*K_Y=K_H-h^*K_X$. So $a(P,Y,0)=a(P,X,0)$. Since $X$ is smooth, $K_H-h^*K_X$ is effective. Thus 
$$a(P,X,0)=1+\opn{coeff}_P(K_H-h^*K_X)\geq 1,$$
which means $Y$ is canonical.\par
\medskip
\emph{Step 4: Show that $f$ is isomorphic on $Y_{reg}$.} Set $U=Y_{reg}$, $V=f^{-1}(U)$ and $\varphi=f|_V:V\rightarrow U$. Both $V$ and $U$ are smooth. Note that the determinant of the differential is a section of $\omega_V\otimes \varphi^*\omega_U^{-1}$, whose zero divisor is $K_V-\varphi^*K_U=0$. Hence the determinant never vanishes. So $\varphi^*\Omega_U^1\rightarrow \Omega_V^1$ is an isomorphism and $\Omega_{V/U}^1=0$. Therefore, $\varphi$ is unramified, hence quasi-finite. Since $\varphi$ is proper, we conclude that $\varphi$ is finite. Finite birational morphism to normal variety is an isomorphism. So we proved (3).\par
\medskip
\emph{Step 5: Prove condition~(4).} Since $K_X=f^*K_Y$, we have $\ml O_X(mK_X)\cong f^*\ml O_Y(mK_Y)$ for every $m\in \bb Z_{>0}$. By projection formula, we have 
$$f_*\ml O_X(mK_X)\cong \ml O_Y(mK_Y)\otimes f_*\ml O_X\cong \ml O_Y(mK_Y).$$
Taking global section gives the isomorphism
$$f^*:H^0(Y,mK_Y)\rightarrow H^0(X,mK_X).$$\par
For any $x\in X$, let $y=f(x)$. Then by definition, $x\in \opn{Bs}|mK_X|$ if and only if $(f^*s)(x)=s(f(x))\otimes 1=0$ for every $s\in H^0(Y,mK_Y)$ if and only if $s(y)=0$ for every $s\in H^0(Y,mK_Y)$ if and only if $y\in \opn{Bs}|mK_Y|$. So we have $\opn{Bs}|mK_X|=f^{-1}(\opn{Bs}|mK_Y|)$.
\end{proof}

With the above proposition, we may pass our question to the canonical model of $X$. However, the canonical model maybe singular. Thus we need the following several propositions to perform LC-center-cutting technique on the canonical model.

\vskip 0.5cm

\begin{proposition}
\label{prop:initial singular pair}
    Let $X$ be a projective klt variety of dimension $n$. Let $A$ be an ample Cartier divisor and $x\in X$ be a closed point. Let $\beta>0$ satisty $\widehat{\opn{vol}}(x,X)<\beta^n A^n$. Then there exist an effective $\bb Q$-Cartier divisor $\Delta_0$ and $0<t_0<\beta$ such that $\Delta_0\sim_{\bb Q}t_0A$ and $(X,\Delta_0)$ is lc near $x$ but not klt at $x$.
\end{proposition}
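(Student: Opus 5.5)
The plan is the standard ``high multiplicity section'' construction, with the multiplicity $(\opn{mult}_x)^n$ replaced by an $\mfk m_x$-primary ideal that nearly attains the normalized volume. By the definition of $\widehat{\opn{vol}}(x,X)$ as an infimum, and since $\widehat{\opn{vol}}(x,X)<\beta^nA^n$, I first choose an $\mfk{m}_x$-primary ideal $\mfk{a}$ with
$$c^n e(\mfk{a})<\beta^n A^n,\qquad c:=\opn{lct}_x(X;\mfk{a}).$$
I then fix a rational $\gamma$ with $c\,e(\mfk a)^{1/n}<\gamma<\beta (A^n)^{1/n}$.

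Next I produce sections of $kA$ vanishing to high order along $\mfk a$. Riemann--Roch gives $h^0(X,kA)=\frac{A^n}{n!}k^n+O(k^{n-1})$. The Hilbert--Samuel definition gives $l(\ml O_{X,x}/\mfk a^{j})=\frac{e(\mfk a)}{n!}j^n+O(j^{n-1})$. Take $j=\lceil \delta k\rceil$ with $\delta$ rational and
$$e(\mfk a)\delta^n<A^n,\qquad \delta>\frac{c}{\beta}.$$
Such $\delta$ exists precisely because $c^n e(\mfk a)<\beta^nA^n$. Then for $k\gg0$ the restriction map $H^0(X,kA)\to \ml O_X(kA)\otimes \ml O_{X,x}/\mfk a^{j}$ has a nonzero kernel. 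This yields a nonzero section, hence an effective Cartier divisor $D_k\in|kA|$ whose local equation lies in $\mfk a^{j}$.

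Now set $D:=\frac1k D_k\sim_{\bb Q}A$. For every prime $P$ over $X$ with center $x$,
$$\opn{ord}_P D\ \ge\ \tfrac{j}{k}\opn{ord}_P(\mfk a)\ \ge\ \delta\,\opn{ord}_P(\mfk a).$$
Since $\opn{lct}_x(X;\mfk a)=c$ is computed on a log resolution principalizing $\mfk a$ (Remark~\ref{rmk:numerical description of ideal lct}), there is a divisor $P$ with $a(P,X,0)=c\,\opn{ord}_P\mfk a$. For this $P$ we get $a(P,X,0)-\frac{c}{\delta}\opn{ord}_P D\le 0$, so $(X,\frac{c}{\delta}D)$ is not klt at $x$. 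Hence $t_0:=\opn{lct}_x(X;D)\le c/\delta<\beta$. Moreover $t_0>0$, because $X$ is klt near $x$ and $D$ is $\bb Q$-Cartier. I then set $\Delta_0:=t_0D\sim_{\bb Q}t_0A$. By the definition of the local lc threshold, $(X,\Delta_0)$ is lc in a neighbourhood of $x$, and it is not klt at $x$: the supremum is attained as a minimum over the finitely many divisors of a log resolution of $(X,D)$, and some divisor with center through $x$ achieves log discrepancy $0$. Rationality of $t_0$ follows from the same finite computation.

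The main obstacle is the bookkeeping in the dimension count. One has to make sure that the $O(j^{n-1})$ error in the colength of $\mfk a^j$ and the Riemann--Roch error do not spoil the strict inequality for large $k$; both are lower order, so choosing $\delta$ strictly inside the open interval handles this. One also has to make sure the order estimate passes from the ideal to the divisor. That is, a local equation lying in $\mfk a^{j}$ must give $\opn{ord}_P\ge j\,\opn{ord}_P\mfk a$, which follows directly from the definition of $\opn{ord}_P$ of an ideal as a minimum over generators. A minor point is that $\ml O_X(kA)$ must be trivialized near $x$ to identify sections with local functions; this is possible since $A$ is Cartier.
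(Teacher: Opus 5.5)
Your proposal is correct and is essentially the paper's argument. Your $\delta$ is the reciprocal of the paper's $\rho$: the paper takes $m=k\rho$, picks $H\in|mA|$ with local equation in $\mfk{a}^k$, and sets $\Delta_0=\lambda H$ with $\lambda=\opn{lct}_x(X;H)\le c/k$, so that $t_0=\lambda m\le c\rho<\beta$. Your choice $j=\lceil\delta k\rceil$ only removes the need for $k$ to be divisible so that $m=k\rho$ is an integer, and the auxiliary rational $\gamma$ you introduce is never used and can be dropped.
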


\begin{proof}
    By the definition of normalized volume, there is an $\mfk{m}_x$-primary ideal $\mfk{a}$ such that $c^ne<\beta^n V$, where $c:=\opn{lct}_x(X;\mfk{a})$, $e:=e(\mfk{a})$ and $V=A^n$. Therefore, we can find a rational number $\rho$ such that $\rho >(\frac{e}{V})^{\frac{1}{n}}$ and $c<\frac{\beta}{\rho}$. For sufficiently large and sufficiently divisible $m=k\rho$, asymptotic Riemann-Roch gives
    $$h^0(X,\ml O_X(mA))=\frac{A^n}{n!}m^n+O(m^{n-1})=\frac{\rho^nA^n}{n!}k^n+O(k^{n-1}).$$
    On the other hand, the Hilbert-Samuel theorem gives
    $$l_R(R/\mfk{a}^k)=\frac{e}{n!}k^n+O(k^{n-1}).$$\par
    Since $\rho^nA^n>e$, we have $h^0(X,\ml O_X(mA))>l_R(R/\mfk{a}^k)$. So the kernel of map
    $$\begin{aligned}
        \Phi:H^0(X,\ml O_X(mA))&\rightarrow R/\mfk{a}^k\\
        s_x&\mapsto s_x\opn{mod}\mfk{a}^k
    \end{aligned}$$
    is not zero. Choose a nonzero section $s\in \opn{ker}\Phi$ and let $H:=\opn{div}(s)\in|mA|$. Since $X$ is integral, $H\geq 0$. It's local equation $h$ at $x$ belongs to $\mfk{a}^k$. In particular, $H$ contains $x$ and $\opn{ord}_E(H)=\opn{ord}_E(h)\geq k\opn{ord}_E(\mfk{a})$.\par
    By Remark~\ref{rmk:numerical description of ideal lct}, there exist a prime divisor $E$ over $X$ with $x\in c_X(E)$, such that $a(E,X,0)=c\opn{ord}_E(\mfk{a})$. So for this prime divisor $E$, we have 
    $$\frac{a(E,X,0)}{\opn{ord}_E(H)}\leq \frac{c}{k}.$$\par
    Next, we want to find $t_0$ that satisfies requirement. Take a log resolution $f:Y\rightarrow X$ of $(X,H)$ such that $E$ lives on $Y$. For this resolution, we have $K_Y+\Theta_Y=f^*K_X$, $f^*H=\sum b_iD_i$ and $\Theta_Y=\sum d_iD_i$. Since $X$ is klt, $d_i<1$ for all $i$. Define $I=\{i\,|\,b_i>0,x\in f(D_i)\}$ and $\lambda:=\min\limits_{i\in I}\frac{1-d_i}{b_i}$. Therefore, we have 
    $$0<\lambda\leq \frac{a(E,X,0)}{\opn{ord}_E(H)}\leq \frac{c}{k}.$$\par
    Since $K_Y+\Theta_Y+\lambda f^*H=f^*(K_X+\lambda H)$, the pair $(X,\lambda H)$ is lc near $x$.\par
    There exist an $i_0$ such that $d_{i_0}+\lambda b_{i_0}=1$. Thus $D_{i_0}$ has log discrepancy $0$ for $(X,\lambda H)$ and $x\in c_X(D_{i_0})$. This means $(X,\lambda H)$ is not klt near $x$.\par
    Finally, define $\Delta_0=\lambda H$ and $t_0=\lambda m$. Then $\Delta_0$ is an effective $\bb Q$-Cartier divisor such that $\Delta_0\sim_{\bb Q}\lambda mA=t_0A$. Since $m=k\rho$, we have $$0<t_0=\lambda k\rho\leq c\rho<\beta.$$
    The pair $(X,\Delta_0)$ is lc near $x$ but not klt at $x$ as required.
\end{proof}

Next, we want to cut the LC center on klt variety, before that, we need the following lemma.
\begin{lemma}
\label{lem:adjunction shift}
    Let $X$ be a projective klt variety and $\Theta$ be an effective $\bb Q$-divisor such that $K_X+\Theta$ is $\bb Q$-Cartier and $(X,\Theta)$ is lc near $x\in X$. Let $Z$ be a normal LC center of $(X,\Theta)$ through $x$. Let $G\geq 0$ be a $\bb Q$-Cartier divisor with $Z\not\subset \opn{Supp}G$. Let $\mathbf{B}_{\Theta}$ be the discriminant $b$-divisor of adjunction to $Z$ in~\ref{thm:adjunction and inverse of adjunction}. For every prime divisor $F$ over $Z$, set
    $$ a_Z(F;\Theta):=1-\opn{mult}_F\mathbf{B}_\Theta.$$ Then we have
    $$a_Z(F;\Theta+G)=a_Z(F;\Theta)-\opn{ord}_F(G|_Z).$$
    Moreover, the corresponding discriminant $b$-divisor satisfies
    $$\mathbf{B}_{\Theta+G}=\mathbf{B}_{\Theta}+\overline{G|_Z}.$$
\end{lemma}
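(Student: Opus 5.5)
The plan is to compute everything through the formula in Proposition~\ref{thm:adjunction and inverse of adjunction}(3), which expresses $1-\opn{mult}_P\mathbf{B}$ as an infimum over LC places $T$ with center $Z$ of a sub-lc threshold on $T$. First we check that $Z$ is still an LC center of $(X,\Theta+G)$ with the same LC places over its generic point, so that the adjunction of Proposition~\ref{thm:adjunction and inverse of adjunction} applies to $\Theta+G$. Since $Z\not\subset\opn{Supp}G$, we have $\opn{ord}_T(G)=0$ for every prime divisor $T$ over $X$ with $c_X(T)=Z$. Indeed, $G$ is $\bb Q$-Cartier, so $mG$ is locally principal with local equation $g$, and $g$ does not vanish at the generic point of $Z$. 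Hence $a(T,X,\Theta+G)=a(T,X,\Theta)$ for all such $T$. Moreover, near the generic point of $Z$ the divisor $G$ is trivial, so $(X,\Theta+G)$ is lc over $\eta_Z$. The set of LC places over $Z$ is therefore the same for both pairs. (Properly, adjunction should be applied over a neighborhood of $\eta_Z$, as in the hypothesis ``lc over the generic point of $W$''.)

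Next, fix a birational model $\widetilde Z\to Z$, a prime divisor $P$ on it, and an LC place $T$ with a log resolution $\pi:\widetilde X\to X$ such that $f_T:T\to\widetilde Z$ is a morphism. We write $K_{\widetilde X}+\Delta_{\widetilde X}=\pi^*(K_X+\Theta)$. Then
\[
K_{\widetilde X}+\Delta_{\widetilde X}+\pi^*G=\pi^*(K_X+\Theta+G).
\]
Because $T\not\subset\opn{Supp}\pi^*G$, restriction gives the new boundary
\[
\Delta'_T=\Delta_T+(\pi^*G)|_T .
\]
The key identity is
\[
(\pi^*G)|_T=f_T^*\bigl(\mu^*(G|_Z)\bigr),
\]
where $\mu:\widetilde Z\to Z$. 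This holds because $\pi|_T$ factors through $\mu\circ f_T$ (after possibly further blowing up, which does not change the thresholds), and pullback of the $\bb Q$-Cartier divisor $G$ commutes with restriction since none of the subvarieties involved lies in $\opn{Supp}G$. Near the generic point of $f_T^{-1}(P)$, we have $\mu^*(G|_Z)=\opn{ord}_P(G|_Z)\,P+(\text{terms not through }\eta_P)$. Hence
\[
(T,\Delta_T+(\pi^*G)|_T+\lambda f_T^*P)\ \text{is sub-lc over }\eta_P
\iff
(T,\Delta_T+(\lambda+\opn{ord}_P(G|_Z))f_T^*P)\ \text{is sub-lc over }\eta_P .
\]
So each supremum shifts by exactly $-\opn{ord}_P(G|_Z)$. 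Taking the infimum over $T$, which ranges over the same set by the first step, yields
\[
a_Z(P;\Theta+G)=a_Z(P;\Theta)-\opn{ord}_P(G|_Z).
\]

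Finally, since this holds for every prime divisor $P$ on every model, the traces satisfy
\[
\mathbf{B}_{\Theta+G,\widetilde Z}=\mathbf{B}_{\Theta,\widetilde Z}+\mu^*(G|_Z)
\]
on every $\widetilde Z$. This is exactly $\mathbf{B}_{\Theta+G}=\mathbf{B}_\Theta+\overline{G|_Z}$, where $\overline{G|_Z}$ is the Cartier closure of the $\bb Q$-Cartier divisor $G|_Z$.

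The main obstacle is the compatibility of pullbacks in the key identity $(\pi^*G)|_T=f_T^*\mu^*(G|_Z)$ over $\eta_P$. The ``sub-lc over $\eta_P$'' condition only involves divisors of $T$ dominating or mapping into $P$, and $\pi^*G$ could have components inside $f_T^{-1}(P)$ that are not vertical in the clean way. I would handle this by working with local equations: locally $mG=\opn{div}(g)$, and $\opn{div}((g\circ\pi)|_T)=\opn{div}(f_T^*\mu^*(g|_Z))$ as functions on $T$, which gives the equality of divisors directly. If needed, I would pass to a common resolution on which everything is SNC.
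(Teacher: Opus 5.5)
Your proposal is correct and follows essentially the paper's proof. You show that the LC places over $Z$ coincide for $\Theta$ and $\Theta+G$, use $(\pi^*G)|_T=f_T^*\mu^*(G|_Z)$ so that each sub-lc threshold in Proposition~\ref{thm:adjunction and inverse of adjunction}(3) drops by exactly $\operatorname{ord}_P(G|_Z)$, take the infimum over $T$, and read off $\mathbf{B}_{\Theta+G}=\mathbf{B}_\Theta+\overline{G|_Z}$ from the traces; your additional checks (that $(X,\Theta+G)$ is still lc over $\eta_Z$, and the local-equation justification of the pullback compatibility) fill in points the paper only asserts.
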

\begin{proof}
    Choose a smooth birational model $p:V\rightarrow Z$ such that $F$ appears. For every prime divisor $E$ over $X$ with $c_X(E)=Z$, since $Z\not\subset\opn{Supp G}$, we have $\opn{ord}_E(G)=0$. Hence we have
    $$a(E,X,\Theta+G)=a(E,X,\Theta).$$
    So $E$ is an LC place of $(X,\Theta)$ such that $c_X(E)=Z$ if and only if $E$ is an LC place of $(X,\Theta+G)$ such that $c_X(E)=Z$. Choose such an LC place $E$. We can find a sufficiently high log resolution  $\pi:Y\rightarrow X$ such that $E$ appears and the induced map $g_E:E\rightarrow V$ is a morphism. Write $K_Y+\Theta_Y=\pi^*(K_X+\Theta)$. And we have $K_Y+\Theta_Y+\pi^*G=\pi^*(K_X+\Theta+G)$. Since $E$ has coefficient $1$ in both boundaries, the adjunction give
    $$\Theta_E=(\Theta_Y-E)|_E,$$ and $$\begin{aligned}
        \Theta_E^{+}&=(\Theta_Y+\pi^*G-E)|_E\\
        &=\Theta_E+(\pi^*G)|_E\\
        &=\Theta_E+g^*_Ep^*(G|_Z).
    \end{aligned}$$
    The last equality follows from the compatibility of restriction and pullback for $\bb Q$-Cartier divisors. Set $c=\opn{ord}_F(G|_Z)$. In an neighbourhood of the generic point $\eta_F$, we have $p^*(G|_Z)=cF$. Consequently, for $s\in \bb R$,  we have
    $$\Theta_E^+ +sg^*_E(F)=\Theta_E+(s+c)g^*_E(F).$$
    Define $$\tau_{E,F}(\Theta)=\sup\{s\in \bb R\,|\,(E,\Theta_E+s g^*_E(F))\text{ is sub-lc over }\eta_F\},$$
    and
    $$\tau_{E,F}(\Theta+G)=\sup\{s\in \bb R\,|\,(E,\Theta_E^++s g^*_E(F))\text{ is sub-lc over }\eta_F\}.$$
    Then we have $\tau_{E,F(\Theta+G)}=\tau_{E,F}(\Theta)-c$. By Proposition~\ref{thm:adjunction and inverse of adjunction}(3), we have $a_Z(F;\Theta)=\inf_{E}\tau_{E,F}(\Theta)$. By our construction, the LC place is the same and $c$ is independent of $E$. Therefore, we have
    $$a_Z(F;\Theta+G)=\inf_{E}(\tau_{E,F}(\Theta)-c)=a_Z(F;\Theta)-c.$$
    And since $a_Z(F;\Theta)=1-\opn{mult}_F \mathbf{B}_{\Theta}$, apply the above equality to every prime divisor on $V$, we have $$(\mathbf{B}_{\Theta+G})_V=(\mathbf{B}_\Theta)_V+p^*(G|_Z).$$
    This holds on every smooth model, and hence on every normal model by pushforward. By the definition of Cartier closure, we are done.\par
    In particular, if $V$ satisfies $\mathbf{K}_V+\mathbf{B}_{\Theta}=\overline{K_V+(\mathbf{B}_\Theta)_V}$, then additivity of Cartier closure gives
    $$\mathbf{K}_V+\mathbf{B}_{\Theta+G}=\overline{K_V+(\mathbf{B}_\Theta)|_V}+\overline{G|_Z}=\overline{K_V+(\mathbf{B}_\Theta)|_V+ p^*(G|_Z)}.$$
    Therefore, the perturbed adjoint $b$-divisor descends to the same model $V$.
\end{proof}

\vskip 0.5cm
Now, we establish the LC cutting technique on projective klt variety.
\begin{lemma}
\label{lem:singular-cut}
Let $X$ be a projective klt variety, $A$ be an ample Cartier divisor and $\Delta$ be an effective $\bb Q$-Cartier divisor such that $(X,\Delta)$ is lc near $x$ but not klt at $x$. Let $Z$ be its minimal LC center through $x$. Set $d=\dim Z>0$, $e=\opn{mult}_xZ$ and $b=b_x(X,\Delta)$. Then for every $\varepsilon>0$, there exist an effective $\bb Q$-divisor $D\sim_{\bb Q} A$ and $t\in\bb Q_{>0}$ such that:
\begin{enumerate}
    \item $(X,\Delta+tD)$ is lc near $x$;
    \item the minimal LC center $Z^\p\ni x$ of $(X,\Delta+tD)$ is strictly contained in $Z$, in other words, $Z^\p\subsetneq Z$;
    \item set $b^\p=b_x(X,\Delta+tD)$, then 
    \begin{equation}\label{eq:cut}
 t<(b-b')\left(\frac{e}{A^d\cdot Z}\right)^{1/d}+\varepsilon.
\end{equation} 
\end{enumerate}
\end{lemma}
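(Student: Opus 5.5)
We construct $D$ as a divisor whose restriction to $Z$ has high order at $x$, relative to an ideal adapted to the local discrepancy. Then we take $t$ to be the lc threshold. This follows Kawamata's and Helmke's approach, with the smooth-ambient steps replaced by adjunction (Proposition~\ref{thm:adjunction and inverse of adjunction} and Lemma~\ref{lem:adjunction shift}).

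\emph{Step 1: find a section on $Z$.} By Proposition~\ref{thm:minimal-center}, $Z$ is normal near $x$. Let $\mathbf{B}=\mathbf{B}_\Delta$ be the discriminant $b$-divisor of adjunction to $Z$. The local discrepancy $b=b_x(X,\Delta)$ should be compared with the adjunction lc threshold of $\mathfrak m_{Z,x}$. The claim is that, over prime divisors $F$ over $Z$ with $c_Z(F)=\{x\}$,
$$\inf_F \frac{a_Z(F;\Delta)}{\opn{ord}_F\mathfrak m_{Z,x}}=b .$$
For ``$\leq$'', take a place $P$ over $X$ computing $b$. Since $Z$ is the minimal center, the places with center $x$ arise, via inversion of adjunction through a dlt modification whose LC place maps onto $Z$, from places over $Z$. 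This is exactly where the formula in Proposition~\ref{thm:adjunction and inverse of adjunction}(3) is used.

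Next, choose a rational $\rho>(e/A^d\cdot Z)^{1/d}$ with $\rho$ close to this value. Asymptotic Riemann--Roch on $Z$ gives $h^0(Z,kmA|_Z)\sim (A^d\cdot Z)(km)^d/d!$. The length of $\mathcal O_{Z,x}/\mathfrak m^{k\rho m}$ is $\sim e(k\rho m)^d/d!$, which is smaller. Hence there is a section of $kmA|_Z$ vanishing to order $\geq k\rho m$ at $x$, up to $o(k)$.

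\emph{Step 2: lift to $X$.} Since $A$ is ample, $H^1(X,\mathcal I_Z(kmA))=0$ for $k\gg0$ by Serre vanishing, so the section lifts. Take $D=\frac1{km}\opn{div}(s)\sim_{\mathbb Q}A$. We may perturb by general members of $|kmA|$, using Theorem~\ref{thm:bertini} and the fact that $A$ is very ample after a multiple, to ensure:
\begin{itemize}
\item $Z\not\subset\opn{Supp}D$;
\item $D$ is general away from $x$;
\item $\opn{ord}_x(D|_Z)\geq\rho-\varepsilon'$.
\end{itemize}

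\emph{Step 3: define $t$ and verify the properties.} Set $t=\opn{lct}_x(X,\Delta;D)$. Near the generic point of $Z$, $D$ is general and has zero order along LC places of $Z$. Therefore the new minimal center $Z'$ through $x$ satisfies $Z'\subsetneq Z$, and $(X,\Delta+tD)$ is lc near $x$. This gives (1) and (2), after a standard tie-breaking perturbation if several centers appear. For (3), use Lemma~\ref{lem:adjunction shift}:
$$a_Z(F;\Delta+tD)=a_Z(F;\Delta)-t\,\opn{ord}_F(D|_Z)\le a_Z(F;\Delta)-t(\rho-\varepsilon')\opn{ord}_F\mathfrak m_{Z,x}.$$
Combined with the identification of $b$ and $b'$ with adjunction thresholds on $Z$, a place $F$ computing $b$ gives a place with $b'\le b-t(\rho-\varepsilon')$ up to the threshold being attained. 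This yields $t\le (b-b')/(\rho-\varepsilon')$, and choosing $\rho,\varepsilon'$ close to their limits gives (3). Here $b'$ is read as the threshold of $\mathfrak m_x$ after the cut, which is nonnegative.

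\emph{Main obstacle.} The main obstacle is the comparison between the ambient local discrepancy $b_x$ and the adjunction-side quantity on $Z$, namely the step ``$b'\le b-t\rho$''. This requires that an $X$-place computing $b_x(X,\Delta)$ corresponds to a place over $Z$ with the same threshold, and that this persists after adding $tD$. My first approach would be to pass to a dlt modification. On it, an LC place $S$ dominates $Z$, and every place over $x$ with small log discrepancy is controlled by its restriction to $S$, via the sub-lc characterization in Proposition~\ref{thm:adjunction and inverse of adjunction}(3) applied fiberwise over $F$. If equality fails, one only needs the inequality in the direction used above, which is the one inversion of adjunction naturally provides.
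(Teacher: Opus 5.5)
Your outline is the same as the paper's: a section of $\mathcal O_Z(\ell A)$ in $\mathfrak m_{Z,x}^{k}$ with $k/\ell>q$ (Riemann--Roch against Hilbert--Samuel), a lift by Serre vanishing, $t=\operatorname{lct}_x(X,\Delta;D)$, and Lemma~\ref{lem:adjunction shift}. The genuine gap is the identity
\[
 b_x(X,\Theta)=\beta(\Theta):=\inf_{c_Z(F)=\{x\}}\frac{a_Z(F;\Theta)}{\operatorname{ord}_F(\mathfrak m_{Z,x})}.
\]
All of (3) rests on it, and you leave it as the ``main obstacle''. The dlt sketch does not supply it. Proposition~\ref{thm:adjunction and inverse of adjunction}(3) only expresses $\operatorname{mult}_P\mathbf B$ through LC places dominating $Z$. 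It says nothing about the non-LC places over $X$ centered at $x$, which are the ones computing $b_x$. Going from sub-lc of the adjoint pair on a model $V$ of $Z$ back to lc of $(X,\Theta)$ near $x$ is inversion of adjunction for a center of arbitrary codimension. That does not follow formally from (3); the paper invokes \cite[Theorem~1.2(iv)]{fujino23}, namely $p(\operatorname{Supp}\mathbf B_V^{>1})=Z\cap\operatorname{Nlc}(X,\Theta)$.

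A second missing piece: Lemma~\ref{lem:adjunction shift} concerns $\mathbb Q$-Cartier divisors not containing $Z$, not the ideal $\mathfrak m_x^\lambda$. The paper bridges this as follows.
\begin{enumerate}
\item Replace $\mathfrak m_x^\lambda$ by $T_\lambda=\frac{\lambda}{N}\sum_{j=1}^N\operatorname{div}(h_j)$, with $N>\lambda$ and $h_j$ general combinations of generators of $\mathfrak m_x$ satisfying $h_j|_Z\neq0$.
\item On a log resolution with $\mathfrak m_x\mathcal O_Y=\mathcal O_Y(-P)$, one has $f^*T_\lambda=\lambda P+\frac{\lambda}{N}\sum M_j$ with $M_j$ reduced and general. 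Hence $(X,\Theta+T_\lambda)$ is lc near $x$ if and only if $\lambda\le b_x(X,\Theta)$.
\item On $V$ with $\mathfrak m_{Z,x}\mathcal O_V=\mathcal O_V(-Q)$, Lemma~\ref{lem:adjunction shift} gives $(\mathbf B_{\Theta+T_\lambda})_V=\mathbf B_V+\lambda Q+\frac{\lambda}{N}\sum L_j$. This is sub-lc near $p^{-1}(x)$ if and only if $\lambda\le\beta(\Theta)$.
\item The Fujino--Hashizume equality for $(X,\Theta+T_\lambda)$ identifies the two conditions, giving both inequalities at once.
\end{enumerate}

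Your fallback that ``one only needs the inequality in the direction used above'' is false. The argument is the chain
\[
 b'\le\beta(\Delta+tD)\le\beta(\Delta)-tq\le b-tq.
\]
Its first step is the adjunction direction $b_x\le\beta$ for $(X,\Delta+tD)$. Its last step is the inversion direction $\beta\le b_x$ for $(X,\Delta)$. Dropping either breaks (3).

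Smaller points:
\begin{itemize}
\item The length comparison needs $\rho<(A^d\cdot Z/e)^{1/d}$, not $\rho>(e/A^d\cdot Z)^{1/d}$; as written, the inequality goes the wrong way.
\item The perturbations by general members and the tie-breaking are unnecessary. $Z\not\subset\operatorname{Supp}D$ holds automatically because $s|_Z\neq0$.
\item $t>0$ because every LC center through $x$ contains $Z$, hence is not contained in $\operatorname{Supp}D$.
\item Your ``therefore'' for (2) is unjustified. Argue directly: an LC place $E$ computing $t$ has $\operatorname{ord}_E D>0$, so $c_X(E)\subset\operatorname{Supp}D$ does not contain $Z$. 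Then Proposition~\ref{thm:minimal-center} gives $Z'\subseteq Z\cap c_X(E)\subsetneq Z$.
\end{itemize}
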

\begin{proof}
The minimal center $Z$ is normal near $x$. So all the adjunction arguments below are carried out in a neighbourhood of $x$ where $Z$ is normal. And the construction of global section is carried out on the original $X$ and $Z$. \par
\emph{Step 0: Establish the adjunction.} Under the situation of Lemma~\ref{lem:adjunction shift}, we have
\begin{equation}\label{eq:lemma43-adjunction-shift}
 a_Z(F;\Theta+G)
 =a_Z(F;\Theta)-\opn{ord}_F(G|_Z).
\end{equation}
We further claim that
\begin{equation}\label{eq:lemma43-ideal-threshold}
 b_x(X,\Theta)
 =\inf_{c_Z(F)=\{x\}}
   \frac{a_Z(F;\Theta)}{\opn{ord}_F(\mfk m_{Z,x})}.
\end{equation}
\emph{Step 1: Construct the divisor.} Set $\rho:=\left(\frac{A^d\cdot Z}{e}\right)^{1/d}.$ Since $d>0$, we claim that $b>0$. In fact, on a log resolution which principalizes $\mfk m_x$, $b=0$ means that there exist an LC place centered at $x$. So the minimal LC center is $\{x\}$, contradicting with $d>0$. Choose a suitable $q\in\bb Q$ such that $0<q<\rho$ and $b(1/q-1/\rho)<\varepsilon$. For a sufficiently large integer $\ell$, put $k=\lfloor q\ell\rfloor+1$.
Then the Hilbert polynomial gives
$$ h^0\bigl(Z,\ml O_Z(\ell A)\bigr)
 =\frac{A^d\cdot Z}{d!}\ell^d+O(\ell^{d-1}),$$
and the Hilbert-Samuel formula give
$$
 \opn{length}\bigl(\ml O_{Z,x}/\mfk m_{Z,x}^{\,k}\bigr)
 =\frac{e q^d}{d!}\ell^d+O(\ell^{d-1}).
$$
Since $A^d\cdot Z>e q^d$, there exist a nonzero section
$s_Z\in H^0(Z,\ml O_Z(\ell A))$ whose local representative
at $x$ belongs to $\mfk m_{Z,x}^{\,k}$.\par
For sufficiently large $\ell$, Serre vanishing gives the surjection:
$$
 H^0\bigl(X,\ml O_X(\ell A)\bigr)
 \longrightarrow H^0\bigl(Z,\ml O_Z(\ell A)\bigr).
$$
So we can lift $s_Z$ to a section in $\ml O_X(\ell A)$, which defines a divisor $H\in|\ell A|$. Define $D=H/\ell$. Then $D$ is effective, $D\sim A$,
$x\in\opn{Supp} D$, and $Z\not\subset\opn{Supp} D$. Moreover, for every prime divisor $F$ over $Z$ with $c_Z(F)=\{x\}$, we have
\begin{equation}\label{eq:lemma43-jet-order}
 \opn{ord}_F(D|_Z)
 \geq\frac{k}{\ell}\opn{ord}_F(\mfk m_{Z,x})> q\opn{ord}_F(\mfk m_{Z,x}).
\end{equation}

\emph{Step 2: Check all three conditions. }
Let $t:=\opn{lct}_x(X,\Delta;D)$ and let $F\ni x$ be an LC center of $(X,\Delta)$. Since $Z$ is the minial LC center through $x$, we have $F\supset Z$. And since $Z\not\subset \opn{Supp}D$, we have $F\not\subset \opn{Supp}D$. We know that $t>0$. Since $x\in\opn{Supp} D$, the threshold is finite and $t\in\bb Q$. Moreover, $(X,\Delta+tD)$ is lc near $x$ and there exist some prime divisor $E$ such that $x\in c_X(E)$, $\opn{ord}_E(D)>0$ and $a(E,X,\Delta+tD)=0$.\par
Since $Z\not\subset D$, $Z$ is an LC center of $(X,\Delta+tD)$. On the other hand, $W:=c_X(E)\subset\opn{Supp} D$, so $W$
does not contain $Z$. The minimal LC center $Z'\ni x$ of $(X,\Delta+tD)$
is contained in $Z$ and $W$. Hence $Z'\subseteq Z\cap W\subsetneq Z$.\par
Finally, put $b'=b_x(X,\Delta+tD)$. Apply \eqref{eq:lemma43-adjunction-shift},
\eqref{eq:lemma43-ideal-threshold} and\eqref{eq:lemma43-jet-order}, then we have
$$\begin{aligned}
 b^\p &=\inf_{c_Z(F)=\{x\}} \frac{a_Z(F;\Delta)-t\opn{ord}_F(D|_Z)}{\opn{ord}_F(\mfk m_{Z,x})}\\
 &\leq\inf_{c_Z(F)=\{x\}}
   \frac{a_Z(F;\Delta)}{\opn{ord}_F(\mfk m_{Z,x})}
   -t\frac{k}{\ell}\\
 &=b-t\frac{k}{\ell}<b-tq.
\end{aligned}$$
Because $(X,\Delta+tD)$ is lc near $x$, we have $b'\geq0$.
By the construction of $q$, we have
\begin{align*}
 t &<\frac{b-b'}q =\frac{b-b'}\rho+(b-b')\left(\frac1q-\frac1\rho\right)\\
 &\leq\frac{b-b'}\rho
   +b\left(\frac1q-\frac1\rho\right)\\
 &<(b-b')\left(\frac{e}{A^d\cdot Z}\right)^{1/d}
   +\varepsilon.
\end{align*}
It remains to prove the claim~\eqref{eq:lemma43-ideal-threshold}.\par
\emph{Step 3: Proof of the Claim. } Define  
$$\beta:=\inf\limits_{c_Z(F)=\{x\}}\frac{a_Z(F;\Theta)}{\opn{ord}_F(\mfk{m}_{Z,x})}.$$
We shall prove that, for every $\lambda\in \bb Q_{\geq 0}$, $\lambda\leq b_x(X,\Theta)$ if and only if $\lambda\leq \beta$.\par
Shrink $X$ to an affine neighbourhood of $x$ on which $(X,\Theta)$ is lc and $Z$ is normal. Let $f:Y\rightarrow X$ be a log resolution of $(X,\Theta)$ and $\mfk m_x$. By definition, we have $K_Y+\Theta_Y=f^*(K_X+\Theta)$ and $\mfk{m}_x\ml O_Y=\ml O_Y(-P)$ for some effefctive $P$. Moreover, $\opn{Supp}\Theta_Y\cup \opn{Supp}P$ is SNC. The ideal pair $(X,\Theta;\mathfrak m_x^\lambda)$ is lc near $x$ if $a(E,X,\Theta)-\lambda\operatorname{ord}_E(\mathfrak m_x)\geq0$ for every prime divisor $E$ over $X$ whose center contains $x$. So we have 
\begin{equation}
\label{equivalence 1}
    (X,\Theta;\mfk m_x^\lambda) \text{ is lc near } x\,\Longleftrightarrow\, (Y,\Theta_Y+\lambda P)\text{ is sub-lc near } f^{-1}(x).
\end{equation}
Choose a smooth model $p:V\rightarrow Z$ such that $\mathbf{K}_Z+\mathbf{B}_{\Theta}=\overline{K_V+\mathbf{B}_V}$ and $\mfk{m}_{Z,x}\ml O_V=\ml O_V(-Q)$ where $Q\geq 0$ and $\opn{Supp}\mathbf{B}_V\cup \opn{Supp}Q$ is SNC. Here $\mathbf{B}_V$ is the trace of $\mathbf{B}_{\Theta}$ and therefore an ordinary divisor on $V$. The existence of this smooth model is ensured by \cite[Theorem~1.2]{fujino23}, followed by a log resolution principalizing $\mfk{m}_{Z,x}$. Since $(X,\Theta)$ is lc, by \cite[Theorem~1.2(iv)]{fujino23}, we have
$$p(\opn{Supp}(\mathbf{B}_V^{> 1}))=Z\cap \opn{Nlc}(X,\Theta)=\varnothing,$$
so $B_V\leq 1$. Since $\mathbf{K}_Z+\mathbf{B}_{\Theta}=\overline{K_V+\mathbf{B}_V}$, for every prime divisor $F$ over $V$, we have $a(F,V,\mathbf{B}_V)=a_Z(F;\Theta)$. Moreover, since $p$ principalize $\mfk{m}_{Z,x}$, we have $\opn{ord}_F(Q)=\opn{ord}_F(\mfk{m}_{Z,x})$. Therefore, we have 
$$a(F,V,\mathbf{B}_V+\lambda Q)=a_Z(F;\Theta)-\lambda \opn{ord}_F(\mfk{m}_{Z,x}).$$
If $c_X(F)=\{x\}$, then $\opn{ord}_F(\mfk{m}_{Z,x})>0$. For other divisors, add $\lambda Q$ will not change the discrepancy. So we have
\begin{equation}
    \label{equivalence 2}
    (V,\mathbf{B}_V+\lambda Q)\text{ is sub lc near } p^{-1}(x)\,\Longleftrightarrow\,\lambda\leq \beta
\end{equation}
It remains to connect the \eqref{equivalence 1} and $\eqref{equivalence 2}$. Let $u_1,\cdots, u_r$ be the generator of $\mfk{m}_x$. Then their restriction to $Z$ also generates $\mfk{m}_{Z,x}=\mfk{m}_x\ml O_Z$. Fix an integer $N>\lambda$, choose sufficiently general linear combinations $h_j=\sum_{i=1}^r c_{ji}u_i$. Define $H_j:=\opn{div}(h_j)$ and $T_\lambda=\frac{\lambda}{N}\sum_{j=1}^N H_j$.
Let $W$ be the finite dimensional complex vector space spanned by $u_1,\cdots, u_r$. On $Y$, the pullabck $f^*u_i$ generate $\ml O_Y(-P)$. On $V$, their restriction generate $\ml O_V(-Q)$. Thus we have $f^*H_j=P+M_j$ where $M_j$ is the moving part. Therefore, the linear system $\{\opn{div}_Y(f^*h)-P\,|\,0\neq h\in W\}$ is base point free on $Y$. Similarily, we have $p^*(H_j|_Z)=Q+L_j$ where $L_j$ is the moving part and $\{\opn{div}_V(p^*(h|_Z))-Q\,|\, h\in W,\,h|_Z\neq 0\}$ is base point free on $V$.\par
Since $\opn{dim}Z>0$, the ideal $\mfk{m}_{Z,x}\neq 0$. Therefore, the restriction map $W\rightarrow H^0(Z,\ml O_Z)$ is nonzero. It's kernel $W_0=\{h\in W\,|\,h|_Z=0\}$ is a proper linear subspace. Therefore, the tuple $(h_1,\cdots,h_N)\in W^N$ with some $h_j|_Z=0$ form a proper closed subset $$\bigcup_{j=1}^N W^{j-1}\times W_0\times W^{N-j}\subset W^N.$$ So we can choose $(h_1,\cdots,h_N)$ outside this closed subset and then all $h_j|_Z\neq 0$.\par 
  By Bertini's theorem~\ref{thm:bertini}, we can further require the following conditions:
\begin{enumerate}
    \item On $Y$, $M_j$ are reduced, have no common components with one another or with $\opn{Supp}\Theta_Y\cup\opn{Supp}P$ and $\bigcup_{j=1}^N \opn{Supp}M_j\cup \opn{Supp}P\cup \opn{Supp}\Theta_Y$ is SNC .
    \item On $V$, $L_j$ are reduced, have no common components with one another or with $\opn{Supp}Q\cup\opn{Supp}B_V$ and $\bigcup_{j=1}^N \opn{Supp}L_j\cup \opn{Supp}Q\cup \opn{Supp}B_V$ is SNC.
\end{enumerate}
Since each one of these conditions can be translated into an open subset of $W^N$, Bertini's theorem then allows us to find a general member $(h_1,\cdots,h_N)$ that satisfies all these conditions.\par
Now, put $\alpha:=\frac{\lambda}{N}<1$. On $Y$, we have $f^*T_\lambda=\lambda P+\alpha\sum_{j=1}^N M_j$. Therefore, we have 
$$f^*(K_X+\Theta+T_{\lambda})=K_Y+\Theta_Y+\lambda P+\alpha\sum_{j=1}^N M_j.$$\par
By our construction, $\opn{Supp}(\Theta_Y+\lambda P+\alpha\sum_{j=1}^NM_j)$ is SNC. Moreover, $\alpha<1$, so we have
$$\begin{aligned}
    (Y,\Theta_Y+\lambda P+\alpha \sum_j M_j) \text{ is sub-lc near }f^{-1}(x)\\
    \Longleftrightarrow\, (Y,\Theta_Y+\lambda P)\text{ is sub-lc near }f^{-1}(x)
\end{aligned}$$
Together with \eqref{equivalence 1} and Definition~\ref{prop:numerical local discrepancy}, we have
\begin{equation}
\label{equivalence 3}
(X,\Theta+T_{\lambda})\text{ is lc near }x\,\Longleftrightarrow\, \lambda\leq b_x(X,\Theta).
\end{equation}
Similarily, on $V$, we have 
$$\mathbf{B}_V^{\lambda}:=(\mathbf{B}_{\Theta+T_{\lambda}})_V=\mathbf{B}_V+p^*(T_{\lambda}|_Z)=\mathbf{B}_V+\lambda Q+\alpha \sum_{j=1}^N L_j.$$
Therefore
\begin{equation*}
    (V,\mathbf{B}_V^\lambda)\text{ is sub-lc near }p^{-1}(x)\,\Longleftrightarrow\, (V,\mathbf{B}_V+\lambda Q)\text{ is sub-lc near } p^{-1}(x).
\end{equation*}
Together with \eqref{equivalence 2}, this is equivalent to $\lambda \leq \beta$.\par
Since $Z\not\subset T_\lambda$, $Z$ is also an LC center of $(X,\Theta+T_\lambda)$. On $V$, by Lemma~\ref{lem:adjunction shift}, we have 
$$\mathbf{K}_Z+\mathbf{B}_{\Theta+T_\lambda}=\overline{K_V+\mathbf{B}_V+p^*(T_{\lambda}|_Z)}=\overline{K_V+\mathbf{B}_V^{\lambda}}.$$ So $\mathbf{K}_V+\mathbf{B}_{\Theta+T_\lambda}$ descends to $V$. By \cite[Theorem~1.2(iv)]{fujino23}, we have
\begin{equation*}
    x\in\opn{Nlc}(X,\Theta+T_\lambda)\,\Longleftrightarrow\, x\in p(\opn{Supp}((\mathbf{B}_V^{\lambda})^{>1}))
\end{equation*}
Equivalently, we have
\begin{equation}
    \label{equivalence 4}
    (X,\Theta+T_\lambda)\text{ is lc near }x\,\Longleftrightarrow\, (V,\mathbf{B}_V^\lambda) \text{ is sub-lc near }p^{-1}(x).
\end{equation}
Combining with \eqref{equivalence 1} to \eqref{equivalence 4}, we have $\lambda \leq b_x(X,\Theta)$ if and only if $\lambda\leq \beta$ for every rational $\lambda\geq 0$. Thus we have
$$ b_x(X,\Theta)
 =\inf_{c_Z(F)=\{x\}}
   \frac{a_Z(F;\Theta)}{\opn{ord}_F(\mfk m_{Z,x})}.$$
\end{proof}

\begin{lemma}\label{lem:improved-finish}
Let $X$ be a projective klt variety and $A$ an ample Cartier divisor.
Let $\Delta_0\equiv t_0A$ be an effective $\bb Q$-Cartier divisor, with
$t_0>0$. Assume that $(X,\Delta_0)$ is lc near $x$ but not klt
at $x$. Set $d_0=\dim_x\opn{Nklt}(X,\Delta_0)$ and
$$
 \phi(d)=
 \begin{cases}
  0,&d=0,\\
  1,&d=1,\\
  d(d+1)/2-1,&d\geq2.
 \end{cases}
$$
Then for every $\varepsilon>0$, there exist an effective $\bb Q$-Cartier
divisor $\Gamma\equiv tA$ such that
$0<t<t_0+\phi(d_0)+\varepsilon$, $(X,\Gamma)$ is lc near $x$,
and $\{x\}$ is the unique LC center through $x$.
\end{lemma}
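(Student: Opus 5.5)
The plan is to run the LC-center-cutting procedure of Lemma~\ref{lem:singular-cut} repeatedly, starting from $\Delta_0$, until the minimal LC center through $x$ is $\{x\}$. At that point a standard tie-breaking perturbation makes $\{x\}$ the unique LC center. Write the successive pairs as $(X,\Delta_i)$ with $\Delta_{i+1}=\Delta_i+t_iD_i$ and $D_i\sim_{\bb Q}A$. Let $Z_i$ be the minimal LC center through $x$, with $d_i=\dim Z_i$, $e_i=\opn{mult}_xZ_i$ and $b_i=b_x(X,\Delta_i)$. Then $\Delta_{i+1}\equiv(t_0+t_0'+\dots+t_i)A$, where the $t_i$ are the cutting costs.

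By Lemma~\ref{lem:singular-cut}, since $d_{i+1}<d_i$ the process stops after at most $d_0$ steps, with $d_i\le d_0$ for all $i$. Each cost satisfies
\[
t_i<(b_i-b_{i+1})\bigl(e_i/(A^{d_i}\cdot Z_i)\bigr)^{1/d_i}+\varepsilon'\le (b_i-b_{i+1})e_i^{1/d_i}+\varepsilon'.
\]
Here $A^{d_i}\cdot Z_i\ge1$ because $A$ is an ample Cartier divisor. I will choose $\varepsilon'$ so that the accumulated errors are below $\varepsilon/2$.

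The key input is an estimate of the form
\[
b_x(X,\Delta_i)\,(\opn{mult}_xZ_i)^{1/d_i}\le d_i,
\]
which I take to be Han's estimate from \cite{Han26}. I would justify it as follows. By claim~\eqref{eq:lemma43-ideal-threshold}, $b_i$ is the lc threshold of $\mfk m_{Z_i,x}$ for the adjoint generalized pair on $Z_i$ given by Proposition~\ref{thm:adjunction and inverse of adjunction}. Hence $b_i^{d_i}e_i\le \widehat{\opn{vol}}$ of that generalized lc germ, which is at most $d_i^{d_i}$, in analogy with Theorem~\ref{thm:bdd normal vol}.

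This estimate gives cost at most $d_i$ for each step, which would only yield the bound $d_0(d_0+1)/2$. To gain the extra $1$, I treat the last two steps together. Suppose the chain passes through a center of dimension $d_j\ge2$ and then through a curve $Z_{j+1}$. The curve is normal near $x$ by Proposition~\ref{thm:minimal-center}, hence smooth there, so $e_{j+1}=1$. The combined cost of the two steps is then
\[
(b_j-b_{j+1})e_j^{1/d_j}+b_{j+1}\le b_je_j^{1/d_j}\le d_j,
\]
using $e_j\ge1$ and the final $b=0$. If the chain jumps directly from $d_j$ to $0$, the cost is at most $b_je_j^{1/d_j}\le d_j$, and the bound is the same.

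Summing over the distinct dimensions visited, the total cost is at most $d_0+(d_0-1)+\dots+2=\phi(d_0)$ when $d_0\ge2$. When $d_0=1$ the single step costs $b_0e_0\le1$, and when $d_0=0$ there is no cost. Skipped dimensions only lower the sum.

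Finally, once $\{x\}$ is the minimal LC center of some $(X,\Delta)$ with $\Delta\equiv t'A$ and $t'<t_0+\phi(d_0)+\varepsilon/2$, I apply the usual perturbation. Take a general $H\in|mA|$ through $x$ with high multiplicity at $x$ but small coefficient $\delta$ (constructed as in Step~1 of Lemma~\ref{lem:singular-cut}), and set $\Gamma=(1-\eta)\Delta+\delta' H$ for small $\eta,\delta'$. Then $x$ is an LC center of $(X,\Gamma)$, all other LC centers of $(X,\Delta)$ through $x$ become klt near $x$, and $(X,\Gamma)$ stays lc near $x$. The extra cost $\delta' m$ is less than $\varepsilon/2$.

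I expect the main obstacle to be the multiplicity–discrepancy inequality $b\,e^{1/d}\le d$ on the singular minimal center. It requires the normalized-volume bound for the generalized pair $(Z,\mathbf B+\mathbf M)$ produced by adjunction. I would first try to quote it directly from \cite{Han26}; failing that, I would derive it from \eqref{eq:lemma43-ideal-threshold} together with a generalized-pair version of Theorem~\ref{thm:bdd normal vol}.
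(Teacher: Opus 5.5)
There is a genuine gap, and it is the step you flag yourself: the inequality $b\,e^{1/d}\le d$. Your suggested route to it does not work.

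\emph{The normalized-volume argument runs the wrong way.} $\widehat{\opn{vol}}$ is an \emph{infimum} over $\mfk m_x$-primary ideals. Testing it on $\mfk m_{Z,x}$ therefore gives only $\widehat{\opn{vol}}\le b^de$, which is a lower bound on $b^de$, not an upper bound.

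\emph{The inequality is not what the paper takes from \cite{Han26}, and for large $d$ it is false in general.} The paper invokes Han's estimate only for a surface center, in the form $be\le 2$. For a counterexample in higher dimension, take $V=\bb P_{\bb P^8}(\ml O\oplus\ml O(8))$. This is a toric Fano $9$-fold with $(-K_V)^9=(17^9-1)/8>10^{10}$. Let $Z$ be the affine cone over $V$ with respect to $-K_V$, so $d=10$, and consider the plt pair $(Z\times\bb A^1,Z\times\{0\})$ at the vertex.
\begin{itemize}
\item Here $b=\opn{lct}(Z;\mfk m_{Z,x})=1$. This is computed by the zero section $E$ of the total space of $\ml O_V(K_V)$, using that $-K_V$ is globally generated.
\item On the other hand, $e\ge\opn{vol}(\opn{ord}_E)=(-K_V)^9$.
\item Hence $b\,e^{1/10}>10=d$.
\end{itemize}
Lemma~\ref{lem:singular-cut} cuts at $x$ itself, and you only use $A^d\cdot Z\ge1$. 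So its cost $(b-b')\bigl(e/(A^d\cdot Z)\bigr)^{1/d}$ is not bounded by $d$ for singular centers of dimension $\ge3$.

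\emph{The missing idea, which is how the paper proceeds, is to avoid multiplicities in dimension $\ge3$ altogether.}
\begin{itemize}
\item If the minimal center $Z$ has dimension $k\ge3$, apply \cite[Proposition~2.7]{Fujino10} to $(X,0)$ with $D=\Delta$ and $H=(k+\eta)A$. Its hypothesis $H^k\cdot Z=(k+\eta)^k(A^k\cdot Z)>k^k$ needs only $A^k\cdot Z\ge1$. It involves no bound on $\opn{mult}_xZ$, and the cost is $k+\eta$.
\item Use Lemma~\ref{lem:singular-cut} only once the center is a surface. There $be\le2$ gives exactly your merged estimate $(b-b_1)\sqrt e+b_1\le b\sqrt e\le 2/\sqrt e\le2$, which is the paper's computation.
\item If the center is a curve, it is smooth at $x$, and the cut costs at most $b\le1$.
\end{itemize}
The total cost is then at most $\sum_{k=3}^{d_0}k+2=\phi(d_0)$; skipped dimensions only help.

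Your final tie-breaking step is the standard one (\cite[Lemma~2.8]{GL24}), provided $H$ is chosen general in a linear system that is free away from $x$. A single section built as in Step~1 of Lemma~\ref{lem:singular-cut} could contain other LC centers of $(X,\Delta)$ through $x$ and make them reappear.
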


\begin{proof}

Let's first consider a minimal LC center $Z$ with $\opn{dim}Z=2$. Set
$b=b_x(X,\Delta)$ and $e=\opn{mult}_xZ$. Han's estimate
\cite[Corollary~4.5]{Han26} tells us that $be\leq2$. Since $A$ is an ample Cartier divisor, we have $A^2\cdot Z\geq1$. By Lemma~\ref{lem:singular-cut}
, we can cut down $Z$ and the increase in the coefficient is $(b-b_1)\sqrt e+\varepsilon$, where $\varepsilon$ is the error term. If the new center is a point, then $b_1=0$. Otherwise it is a normal curve, hence smooth at $x$, and a second cut increase the coefficient by $b_1+\epsilon$. The final coefficient of $A$ is therefore at most
$$
 (b-b_1)\sqrt e+b_1 \leq b\sqrt e \leq\frac2{\sqrt e} \leq2,
$$
apart from the error term $\varepsilon$. \par
If $\opn{dim}Z=1$, in other words, $Z$ is a curve. Then by \ref{lem:singular-cut}, the cutting procedure will increase the coefficient by at most $1$, since its multiplicity is one and $b_x(X,\Delta)\leq1$.\par

If the minimal LC center has dimension $k\geq3$, apply
\cite[Proposition~2.7]{Fujino10} to the pair $(X,0)$,
$D=\Delta$, and $H=(k+\eta)A$.
The numerical hypothesis holds because
$H^k\cdot Z=(k+\eta)^k(A^k\cdot Z)>k^k$.
The dimension of minimal LC center will decrease, and the increase in the coefficient is at most $k+\eta$. Repeat until the dimension of minimal LC center is less or equal to $2$, then return to the surface or curve case.\par
Therefore, for $d_0\geq2$, the total increase in the coefficient, aparts from the error term, is at most
$$
 \sum_{k=3}^{d_0}k+2=\frac{d_0(d_0+1)}2-1.
$$
Once $\{x\}$ becomes an LC center, we can perturb it and make it the unique LC center through $x$. Morr precisely, by \cite[Lemma~2.8]{GL24}, we have
$(1-\delta)\Delta+\delta E_\delta$, with $E_\delta\sim aA$ for fixed $a>0$ and arbitrarily small $\delta>0$. And $\{x\}\ni x$ is the unique LC center of $(X,(1-\delta)\Delta+\delta E_\delta)$. This perturbation changes the final coefficient of $A$ sufficiently small so that it can be absorbed by the error term $\varepsilon$.
\end{proof}

\begin{proposition}\label{prop:improved-singular}
Let $X$ be a projective klt variety of dimension $n\geq2$. Let
$A$ be an ample Cartier divisor on $X$. Set $V=A^n$. Define
\begin{equation}\label{eq:new-R}
 R_n(V):=
 \begin{cases}
  \frac n{V^{1/n}}+1,&n=2,\\
  \frac n{V^{1/n}}+\frac{n(n-1)}{2}-1,&n\geq3.
 \end{cases}
\end{equation}
For every singular point $x\in X$, there exist an effective
$\bb Q$-Cartier divisor $\Gamma_x\equiv t_xA$, with
$0<t_x<R_n(V)$, such that $(X,\Gamma_x)$ is lc near $x$ and
$x$ is an isolated non-klt point.
\end{proposition}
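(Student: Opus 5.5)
The plan is to combine Proposition~\ref{prop:initial singular pair} with Lemma~\ref{lem:improved-finish}. The only new input is the strict inequality $\widehat{\opn{vol}}(x,X)<n^n$ at a singular point, which comes from Theorem~\ref{thm:bdd normal vol}.

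\emph{Step 1: the initial pair.} Fix a singular point $x\in X$. Since $X$ is klt, Theorem~\ref{thm:bdd normal vol} gives $\widehat{\opn{vol}}(x,X)<n^n$. Set $\beta:=n/V^{1/n}$, so that $\beta^nA^n=n^n$. Then the hypothesis $\widehat{\opn{vol}}(x,X)<\beta^nA^n$ of Proposition~\ref{prop:initial singular pair} holds. That proposition produces an effective $\bb Q$-Cartier divisor $\Delta_0\sim_{\bb Q}t_0A$ with $0<t_0<\beta=n/V^{1/n}$, such that $(X,\Delta_0)$ is lc near $x$ but not klt at $x$. The inequality $t_0<\beta$ is strict, and I will use the gap $\beta-t_0>0$ to absorb the error term later.

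\emph{Step 2: cutting down to the point.} Let $d_0=\dim_x\opn{Nklt}(X,\Delta_0)$. The minimal LC center through $x$ is proper, because $\Delta_0\neq 0$ near $x$ and $X$ is klt; hence $d_0\le n-1$. I will double-check this bound on $d_0$ carefully, since it is what produces $n(n-1)/2$ rather than $n(n+1)/2$.

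Apply Lemma~\ref{lem:improved-finish} with $\varepsilon:=\beta-t_0>0$ (any smaller positive number also works). It yields $\Gamma_x\equiv t_xA$ with $(X,\Gamma_x)$ lc near $x$ and $\{x\}$ the unique LC center through $x$. The unique-center condition says exactly that $x$ is an isolated point of $\opn{Nklt}(X,\Gamma_x)$. The lemma gives $t_x<t_0+\phi(d_0)+\varepsilon$. To finish, I need $\phi(d_0)\le\phi(n-1)$ together with the inequality $t_0+\phi(n-1)+\varepsilon\le R_n(V)$.

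\emph{Step 3: the numerics.} The function $\phi$ is nondecreasing, so $\phi(d_0)\le\phi(n-1)$. Compute $\phi(n-1)$ in two cases:
\begin{itemize}
\item For $n\ge3$ we have $n-1\ge2$, so $\phi(n-1)=\frac{(n-1)n}{2}-1$.
\item For $n=2$ we have $\phi(1)=1$.
\end{itemize}
In both cases $\phi(n-1)$ is exactly the second summand of $R_n(V)$. If instead we used $\varepsilon$ with $t_0+\varepsilon<\beta$, we would get $t_x<\beta+\phi(n-1)=R_n(V)$, which is strict as required. Choosing $\varepsilon=\beta-t_0$ gives only $t_x<\beta+\phi(n-1)$ from the strict inequality in Lemma~\ref{lem:improved-finish}, which is already enough. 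Either way, $0<t_x<R_n(V)$.

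The main obstacle is the dimension count in Step 2: justifying that the minimal LC center has dimension at most $n-1$. If $d_0$ were allowed to equal $n$, the bound would degrade to $\phi(n)$. I expect it to be enough that the LC centers of $(X,\Delta_0)$ near $x$ are contained in $\opn{Supp}\Delta_0$, since $X$ is klt. Consequently $d_0\le n-1$.
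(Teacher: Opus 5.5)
Your proposal is correct and takes the same route as the paper's proof. Theorem~\ref{thm:bdd normal vol} gives $\widehat{\opn{vol}}(x,X)<n^n=\beta^nV$ at a singular point, Proposition~\ref{prop:initial singular pair} gives $t_0<\beta$, and Lemma~\ref{lem:improved-finish} with $\varepsilon\le\beta-t_0$ together with monotonicity of $\phi$ yields $t_x<\beta+\phi(n-1)=R_n(V)$. Your justification of $d_0\le n-1$ (near $x$ the non-klt locus lies in $\opn{Supp}\Delta_0$ because $X$ is klt) supplies a detail the paper only asserts.
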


\begin{proof}
    Let $\beta:=n/V^{1/n}$. By Theorem~\ref{thm:bdd normal vol}, we have 
    $\widehat{\opn{vol}}(x,X)<n^n=\beta^nV$. By Proposition~\ref{prop:initial singular pair}, there exist 
$\Delta_0\sim t_0A$, with $0<t_0<\beta$ such that $(X,\Delta_0)$ is lc near
$x$ and not klt at $x$ and $d_0\leq n-1$. Choose $0<\varepsilon<\beta-t_0$ and apply
Lemma~\ref{lem:improved-finish}. Since $\phi$ is increasing, we have
$$
 0<t_x<t_0+\phi(d_0)+\varepsilon
       <\beta+\phi(n-1)=R_n(V).
$$
And the unique LC center through $x$ is $\{x\}$, so after shrinking
, the non-klt locus is exactly $\{x\}$.
\end{proof}

%% file: MainTheoremandApplication.tex
\section{Main Theorem and Application}

\begin{lemma}
\label{lem:smooth Bs locus in Exc}
    Let $X$ be a smooth minimal projective variety of general type and $f:X\rightarrow Y$ be the canonical contraction in Prop~\ref{prop:canonical contraction}. If $m\geq 2n+1$, then 
    $$\opn{Bs}|mK_X|\subset \opn{Exc}(f).$$
\end{lemma}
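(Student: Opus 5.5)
Let $x\in X\setminus\opn{Exc}(f)$ and set $y=f(x)$. The aim is to show $x\notin\opn{Bs}|mK_X|$ for $m\geq 2n+1$. By Proposition~\ref{prop:canonical contraction}(4) it is equivalent to show $y\notin\opn{Bs}|mK_Y|$. There are two natural routes, and I would pursue the first.

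\emph{Route 1: apply Han's criterion directly on $X$.}
\begin{enumerate}
\item Identify $\opn{Exc}(f)$ with $\mathbf{B}_+(K_X)$. Since $K_X=f^*K_Y$ with $K_Y$ ample and $f$ birational, $\mathbf{B}_+(f^*K_Y)=\opn{Exc}(f)$; this is the standard result of Boucksom--Cacciola--Lopez, and it can also be checked by hand.
\item Hence $x\notin\mathbf{B}_+(K_X)$. By the Remark after the definition of $\mathbf{B}_+$, there is a Kodaira decomposition $K_X\sim_{\bb Q}A+F$ with $A$ ample and $x\notin\opn{Supp}F$.
\item Invoke \cite[Theorem~5.4]{Han26}, which I take to be the nef-and-big version of the linear Fujita bound: for a nef and big Cartier divisor $B$ on a smooth projective $n$-fold and $x\notin\mathbf{B}_+(B)$, the system $|K_X+mB|$ is free at $x$ for $m\geq 2n$.
\item Apply it with $B=K_X$. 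Then $|K_X+(m-1)K_X|=|mK_X|$ is free at $x$ once $m-1\geq 2n$, i.e.\ $m\geq 2n+1$. This is exactly the bound in the statement.
\end{enumerate}

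\emph{Route 2: run the LC-center-cutting argument, if Han's theorem is only stated for ample $B$.} I would reproduce the cutting at $x$ with $B=(m-1)K_X$.
\begin{enumerate}
\item Because $x\notin\opn{Exc}(f)$, every positive-dimensional subvariety $Z\ni x$ is not contained in $\opn{Exc}(f)$. Hence $K_X^{\dim Z}\cdot Z=K_Y^{\dim Z}\cdot f(Z)\geq 1$, since $K_Y$ is ample and Cartier.
\item Near $x$, $f$ is an isomorphism onto the smooth locus of $Y$, so the local discrepancy and multiplicity estimates of Han \cite[Corollary~4.5]{Han26} apply verbatim to the minimal LC centres.
\item Construct the sections in $|\ell K_X|$ by lifting from $Z$. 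The restriction $H^0(X,\ell K_X)\to H^0(Z,\ell K_X|_Z)$ is handled by writing $\ell K_X=K_X+(\ell-1)K_X$ and using Kawamata--Viehweg or Nadel vanishing with multiplier ideals. Alternatively, work on $Y$ and use Serre vanishing, since $f(Z)\ni y$ is a subvariety through a smooth point of $Y$ and $K_Y$ is ample.
\item Once $\{x\}$ is the unique LC center of $(X,G)$ with $G\sim_{\bb Q}cK_X$ and $c<m-1$, Proposition~\ref{prop:isolated-nadel} gives a section of $mK_X$ not vanishing at $x$. Its hypothesis holds because $mK_X-(K_X+G)\sim_{\bb Q}(m-1-c)K_X$ is nef and big.
\end{enumerate}

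\emph{Main obstacle.} The difficulty is confirming that Han's bound $2n$ survives when the positivity is only nef and big but ample away from $\mathbf{B}_+$. Concretely, the lifting and volume estimates must only need $B^{\dim Z}\cdot Z\geq 1$ for $Z\ni x$ with $Z\not\subset\mathbf{B}_+(B)$. The first thing I would check is whether \cite[Theorem~5.4]{Han26} is already stated in this form. If it is, the lemma follows from the $\mathbf{B}_+$ identification alone.
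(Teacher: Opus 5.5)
Your Route 1 is the paper's argument: apply \cite[Theorem~5.4]{Han26} with $L=K_X$ to get $\opn{Bs}|K_X+(m-1)K_X|\subset\mathbf{B}_+(K_X)$ for $m-1\geq 2n$. Then use Boucksom--Cacciola--Lopez to identify $\mathbf{B}_+(f^*K_Y)=f^{-1}(\mathbf{B}_+(K_Y))\cup\opn{Exc}(f)=\opn{Exc}(f)$. The paper invokes Han's theorem in exactly this nef-and-big $\mathbf{B}_+$ form, so your fallback Route 2 is not needed.
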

\begin{proof}
    Apply Theorem~\cite[Theorem~5.4]{Han26} with $L=K_X$, we then have
    $$\opn{Bs}|mK_X|=\opn{Bs}|K_X+(m-1)K_X|\subset \mathbf{B}_+(K_X)$$
    Since $K_X=f^*K_Y$ and $K_Y$ is ample, by \cite[Lemma~3.1]{BCL14}, we have
    $$\mathbf{B}_+(K_X)=f^{-1}(\mathbf{B}_+(K_Y))\cup\opn{Exc}(f)=\opn{Exc}(f).$$
\end{proof}

\vskip 0.5cm

\begin{proof}[Proof of Theorem~\ref{maintheorem3}]
    Consider the canonical contraction $f:X\rightarrow Y$ in Proposition \ref{prop:canonical contraction}. By Lemma~\ref{lem:smooth Bs locus in Exc}, we have $\opn{Bs}|mK_X|\subset \opn{Exc}(f)$.\par
    Next, we handle with the singular point case. For every $y\in f(\opn{Exc}(f))\subset\opn{Sing}Y$, Proposition~\ref{prop:improved-singular} provides an effective $\bb Q$-Cartier divisor $\Gamma_y\equiv t_yK_Y$ with $t_y<R_n(V)\leq m-1$ such that $(Y,\Gamma_y)$ is lc near $y$ and $y$ is an isolated non-klt point. Note that $mK_Y-(K_Y+\Gamma_y)\equiv (m-1-t_y)K_Y$ is ample. So by Proposition~\ref{prop:isolated-nadel}, there exist a global section $s$ of $mK_Y$ nonzero at $y$. Then $f^*s$ nonvanishes on the whole fiber $f^{-1}(y)$. Therefore, we have $\opn{Bs}|mK_X|\cap \opn{Exc}(f)=\varnothing$. We conclude that $\opn{Bs}|mK_X|=\varnothing$.
\end{proof}

\vskip 0.5cm
\begin{proof}[Proof of Theorem~\ref{maintheorem}]
    Since $X$ is a smooth minimal projective variety of dimension $n\geq 4$ of general type, $K_X$ is a nef and big Cartier divisor and $K_X^n\geq 1$. So $R_n(K_X^n)\leq \frac{n(n+1)}{2}-1$ and $R_n(K_X^n)\geq 2n$ when $n\geq 3$.
\end{proof}
\vskip 0.5cm

\begin{corollary}
    Let $X$ be a smooth minimal projective $4$-fold of general type, then $|10K_X|$ is base point free. If $K_X^4\geq 4$, then $|9K_X|$ is base point free. 
\end{corollary}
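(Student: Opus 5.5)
The corollary follows by plugging $n=4$ into Theorem~\ref{maintheorem3}, where the hypothesis is $m\geq \max\{2n+1,R_n(V)+1\}$ with $V=K_X^4$. For $n=4$ we have $2n+1=9$ and
$$R_4(V)=\frac{4}{V^{1/4}}+\frac{4\cdot 3}{2}-1=\frac{4}{V^{1/4}}+5.$$
So the only work is to bound $R_4(V)+1$ from above in the two regimes $V\geq 1$ and $V\geq 4$, and to check that the resulting integer $m$ is at least $9$.

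For the first claim, $K_X$ is a nef and big Cartier divisor, so $V=K_X^4$ is a positive integer and $V\geq 1$. This gives $R_4(V)\leq 4+5=9$, hence $R_4(V)+1\leq 10$. Together with $10\geq 9=2n+1$, Theorem~\ref{maintheorem3} applies with $m=10$, so $|10K_X|$ is base point free. This is also just Theorem~\ref{maintheorem} with $n(n+1)/2=10$.

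For the second claim, suppose $V\geq 4$. Then $V^{1/4}\geq 4^{1/4}=\sqrt2$, so
$$R_4(V)\leq \frac{4}{\sqrt2}+5=2\sqrt2+5\approx 7.83,$$
and therefore $R_4(V)+1<9$. Since also $9\geq 2n+1=9$, Theorem~\ref{maintheorem3} gives base point freeness of $|9K_X|$.

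The only point to watch is that the threshold $V\geq 4$ clears the bound with margin: the bound $R_4(V)+1\leq 9$ needs only $4/V^{1/4}\leq 3$, i.e.\ $V\geq 256/81\approx 3.16$. Since $V$ is an integer, $V\geq 4$ is the first admissible value, and the comparison $2\sqrt2<3$ is elementary. There is no genuine obstacle; all the content lives in Theorem~\ref{maintheorem3}.
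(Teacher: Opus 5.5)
Your proposal is correct and follows the paper's own proof: substitute $n=4$ into Theorem~\ref{maintheorem3}, note that $2n+1=9$, and check that $R_4(V)+1=\frac{4}{V^{1/4}}+6$ is at most $10$ when $V\geq 1$ and below $9$ when $V\geq 4$. Your extra observations, that $V=K_X^4$ is a positive integer and that the real threshold is $V\geq 256/81$, are accurate refinements of the same calculation.
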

\begin{proof}
    Direct calculation shows that $2n+1=9$ and $R_4(K_X^4)+1\leq 10$ since $K_X^4\geq 1$. If $K_X^4\geq 4$, then $R_4+1\leq 9$.
\end{proof}